\documentclass[a4paper,11pt]{article}

\pagestyle{myheadings}

\usepackage[latin1]{inputenc}
\usepackage[T1]{fontenc}
\usepackage[english]{babel}

\usepackage{mathrsfs}
\usepackage{amscd}
\usepackage{amsfonts}
\usepackage{amsmath}
\usepackage{amssymb}
\usepackage{amstext}
\usepackage{amsthm}
\usepackage{amsbsy}

\usepackage{xspace}
\usepackage[all]{xy}
\usepackage{graphicx}
\usepackage{url}
\usepackage{latexsym}

\usepackage{graphicx} 


\usepackage{booktabs} 
\usepackage{array} 
\usepackage{paralist} 
\usepackage{verbatim} 
\usepackage{subfig} 

\usepackage{fancyhdr} 
\pagestyle{fancy} 
\lhead{}\chead{}\rhead{}
\lfoot{}\cfoot{\thepage}\rfoot{}

\usepackage{sectsty}
\allsectionsfont{\sffamily\mdseries\upshape} 

\usepackage[nottoc,notlof,notlot]{tocbibind} 
\usepackage[titles,subfigure]{tocloft} 


\usepackage[textwidth=100pt,textsize=footnotesize,bordercolor=white,color=blue!30]{todonotes}
\usepackage{hyperref} 

\makeatletter
\newcommand*{\rom}[1]{\expandafter\@slowromancap\romannumeral #1@}
\makeatother

\theoremstyle{definition}

\newtheorem{fact}{fact}

\newtheorem{thm}[fact]{Theorem}
\newtheorem{lemma}[fact]{Lemma}
\newtheorem{prop}[fact]{Proposition}
\newtheorem{corollary}[fact]{Corollary}
\newtheorem{defini}[fact]{Definition}

\title{Algorithmic Randomness for Infinite Time Register Machines}
\author{Merlin Carl}
\date{}

\begin{document}

\maketitle

\begin{abstract}
A concept of randomness for infinite time register machines ($ITRM$s), resembling Martin-L\"of-randomness, is defined and studied. In particular, we show that for this notion of randomness,
 computability from mutually random reals implies computability and that an analogue of van Lambalgen's theorem holds.
\end{abstract}

\section{Introduction}

Martin-L\"of-randomness ($ML$-randomness, see e.g. \cite{DoHi}) provides an intuitive and conceptually stable clarification of the informal notion of a random sequence over a finite alphabet.
Since its introduction, several strengthenings and variants of $ML$-randomness have been considered; a recent example is the work of Hjorth and Nies on $\Pi_{1}^{1}$-randomness, which led to
interesting connections with descriptive set theory (\cite{HN}).\\
We are interested in obtaining a similar notion based on machine models of transfinite computations.  In this paper, we will exemplarily consider infinite time register machines.
Infinite Time Register Machines ($ITRM$s), introduced in \cite{ITRM} and further studied in \cite{ITRM2}, work similar to the classical unlimited register machines described in \cite{Cu}. 
In particular, they use finitely many registers each of which can store a single natural number. The difference is that $ITRM$s use transfinite ordinal running time: The state of an $ITRM$
at a successor ordinal is obtained as for $URM$s. At limit times, the program line is the limit inferior of the earlier program lines and there is a similar limit rule for the register contents. 
If the limes inferior of the earlier register contents is infinite, the register is reset to $0$.

The leading idea of $ML$-randomness is that a sequence of $0$ and $1$ is random iff it has no special properties, where a special property should be a small (e.g. measure $0$) set of reals
that is in some way accessible to a Turing machine.
Classical Turing machines, due to the finiteness of their running time, have the handicap that the only decidable null set of reals is the empty set. In the definition of $ML$-randomness,
this difficulty is overcome by merely demanding the set $X$ to be effectively approximated by a recursively enumerable sequence of sets of intervals with controlled convergence behaviour.
This limits the sense in which this randomness notion is effective, which was the motivation for Schnorr's criticism of $ML$-randomness (see e.g. \cite{Sc}).
For models of transfinite computations, this trick is unnecessary and this criticism can be entirely avoided:
The decidable sets of reals form a rich class (particularly including all sets approximated by $ML$-tests), while plausibility is retained as reals in a set decidable by such a machine
can still be reasonably said to have a special property.
Hence, we define:

\begin{defini}{\label{randomdef}} Recall that a set $X\subseteq\mathfrak{P}(\omega)$ is meager iff it is a countable union of nowhere dense sets.
$x\subseteq\omega$ is $ITRM$-random iff there is no $ITRM$-decidable, meager set $X\subseteq\mathfrak{P}(\omega)$ such that $x\in X$.
\end{defini}

This obviously deviates from the definition of $ML$-randomness since we use meager sets rather than null sets as our underlying notion of `small'. The reason is simply
that this variant turned out to be much more convenient to handle for technical reasons. We are pursuing the analogous notion for null sets in ongoing work \cite{CaSc2}.

We will now summarize some key notions and results on $ITRM$s that will be used in the paper.

\begin{defini}
For $P$ a program, $x,y\in\mathfrak{P}(\omega)$, 
$P^{x}\downarrow=y$ means that the program $P$, when run with oracle $x$, holds on every input $i\in\omega$ and outputs $1$ iff $i\in y$ and $0$, otherwise.
$x\subseteq\omega$ is $ITRM$-computable in the oracle $y\subseteq\omega$ iff there is an $ITRM$-program $P$ such that $P^{y}\downarrow=x$, in which case we occasionally write $x\leq_{ITRM}y$. If $y$ can be taken to be
$\emptyset$, $x$ is $ITRM$-computable. We denote the set of $ITRM$-computable reals by $COMP$.
\end{defini}

\textbf{Remark}: We occasionally drop the $ITRM$-prefix as notions like `computable' always refer to $ITRM$s in this paper.

\begin{thm}{\label{relITRM}}
 Let $x,y\subseteq\omega$. Then $x$ is $ITRM$-computable in the oracle $y$ iff $x\in L_{\omega_{\omega}^{CK,y}}[y]$, where $\omega_{i}^{CK,x}$ denotes the $i$th $x$-admissible ordinal.
\end{thm}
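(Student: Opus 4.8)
The plan is to prove the two implications separately, exploiting the stage-by-stage correspondence between an $ITRM$-computation and the levels of the $y$-relative constructible hierarchy. On the computational side the key ingredient is a halting-time bound: if $P^{y}\downarrow$ and $P$ uses $n$ registers, then every halting run of $P^{y}$ stops at an ordinal time below $\omega_{n}^{CK,y}$ (absorbing the program counter and any fixed auxiliary registers into $n$). On the set-theoretic side the key ingredient is that, for each fixed $n$, some $ITRM$-program with oracle $y$ outputs a real coding $L_{\omega_{n}^{CK,y}}[y]$.

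For ``$x\leq_{ITRM}y\ \Rightarrow\ x\in L_{\omega_\omega^{CK,y}}[y]$'', fix $P$ with $P^{y}\downarrow=x$, using $n$ registers. The configuration of $P^{y}$ at time $t$ (program line together with register contents) is obtained from the sequence of earlier configurations by a rule that is absolute and $\Delta_{1}$ in $t$ and the computation sequence --- in particular the $\liminf$ rule for program lines and the limit rule for register contents (with reset when the $\liminf$ is infinite) are $\Delta_{0}$ --- so the map sending $t$ to the configuration at time $t$ is $\Sigma_{1}$-definable over any level $L_{\alpha}[y]$ closed under it. If $\alpha$ is $y$-admissible and $P^{y}$ has not halted before $\alpha$, the loop-detection analysis of \cite{ITRM,ITRM2} shows that the configuration realised at time $\alpha$ has already occurred cofinally often below $\alpha$ in such a way that the computation is thereafter periodic, hence never halts; tracking the nesting of loops over the $n$ registers sharpens the halting bound to $\omega_{n}^{CK,y}$. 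Now for each input $i$ the bit $x(i)$ is decided by a run of length $<\omega_{n}^{CK,y}$, and the predicate ``$\sigma$ is a halting run of $P^{y}$ on input $i$'' is $\Delta_{1}$; since $\omega_{n}^{CK,y}$ is $y$-admissible, $\Sigma_{1}$-collection bounds all these runs inside a single $L_{\gamma}[y]$ with $\gamma<\omega_{n}^{CK,y}$, whence $x$ is definable over $L_{\gamma}[y]$ and so $x\in L_{\gamma+\omega}[y]\subseteq L_{\omega_\omega^{CK,y}}[y]$.

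For the converse, assume $x\in L_{\omega_\omega^{CK,y}}[y]$ and pick $n$ with $x\in L_{\omega_{n}^{CK,y}}[y]$. It suffices to produce, by an $ITRM$-program with oracle $y$, a real $c_{n}$ coding the structure $(L_{\omega_{n}^{CK,y}}[y];\in,y)$: from $c_{n}$ one extracts $x$ by an ordinary finite search guided by a fixed $L$-name (a natural number coding the relevant ordinal and term) for $x$, which may be hard-wired into the program since $\leq_{ITRM}$ need not be uniform in $x$. The real $c_{n}$ is built by induction on $n$. An $ITRM^{y}$ carries out ordinary finite register-machine arithmetic and code manipulation as usual, and the limit rules give it a ``stabilisation detector'': running a universal $ITRM^{y}$-computation (equivalently, constructing $L[y]$ level by level via a code) and watching the $\liminf$ behaviour of the counters, it can recognise when a $y$-admissible height has been passed; each instance of this subroutine costs a fixed block of additional registers, so $n$ nested instances let the machine climb past the $n$-th $y$-admissible and emit a code of $L_{\omega_{n}^{CK,y}}[y]$. (Equivalently: with roughly $n$ extra registers an $ITRM$ solves the halting problem for $(n{-}1)$-register $ITRM^{y}$'s, and such a halting problem is $ITRM$-equivalent to a code of $L_{\omega_{n}^{CK,y}}[y]$.) This yields $x\leq_{ITRM}y$.

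The main obstacle is the constructive direction: realising the $L[y]$-construction up to $\omega_{n}^{CK,y}$ on a device with only finitely many registers, each holding a single natural number, and in particular the precise register-accounting showing that one further block of registers buys exactly one more level of $y$-admissibility, together with verifying that the $\liminf$ limit rule does the stabilisation bookkeeping correctly --- this is where the reset of registers with infinite $\liminf$ is essential. The relativisation to the oracle $y$ is then routine: every Church--Kleene-type ordinal is replaced by its $y$-relative analogue and every definability computation is performed over $L[y]$ instead of $L$. (Both directions are in essence due to Koepke and Miller; see \cite{ITRM,ITRM2}.)
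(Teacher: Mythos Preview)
Your sketch is correct and is precisely the argument the paper has in mind: the paper's own proof is the one-line remark that this is a straightforward relativization of the main result of \cite{ITRM}, and what you have written is an accurate outline of that result (halting-time bound via the loop analysis for one direction, $ITRM$-construction of a code for $L_{\omega_{n}^{CK,y}}[y]$ for the other) carried out relative to the oracle $y$. So you are not taking a different route --- you are simply unpacking the citation.
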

\begin{proof}
This is a straightforward relativization of the main result of \cite{ITRM}.
\end{proof}

\begin{thm}{\label{hp}}
Let $\mathbb{P}_{n}$ denote the set of $ITRM$-programs using at most $n$ registers, and let $(P_{i,n}|i\in\omega)$ enumerate $\mathbb{P}_{n}$ in some natural way. 
Then the bounded halting problem $H_{n}^{x}:=\{i\in\omega|P_{i,n}^{x}\downarrow\}$ is computable uniformly in the oracle $x$ by an $ITRM$-program.\\
Furthermore, if $P\in\mathbb{P}_{n}$ and $P^{x}\downarrow$, then $P^{x}$ halts in less than $\omega_{n+1}^{CK,x}$ many steps. 
Consequently, if $P$ is a halting $ITRM$-program, then $P^{x}$ stops in less than $\omega_{\omega}^{CK,x}$ many steps.
\end{thm}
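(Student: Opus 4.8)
The plan is to prove the two assertions in reverse order: first the bound on halting times, by induction on the register number $n$; then the decidability (uniformly in the oracle $x$) of the bounded halting problem $H^x_n$, by simulating the relevant programs and stopping the simulation either when the simulated machine halts or when a divergence pattern extracted from the proof of the time bound is detected.

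For the time bound, fix $P\in\mathbb{P}_n$ and an oracle $x$, and let $C\colon t\mapsto c_t$ be the (deterministic) configuration sequence of $P^x$, where $c_t$ encodes the active program line and the contents of the $n$ registers at time $t$. Since $C$ is obtained by the evident transfinite recursion --- successor steps as for $URM$s, and at limits the program line is the liminf of the earlier lines while each register takes the liminf of its earlier contents, reset to $0$ if that liminf is infinite --- the map $C\restriction\beta$ is given by a $\Sigma_1$-recursion uniform in $x$, so it is $\Sigma_1$-definable over $L_\beta[x]$; in particular, whenever $\alpha$ is $x$-admissible and $P^x$ has not halted before $\alpha$, the function $C\restriction\alpha$ lies in $L_\alpha[x]$, so (as in Theorem~\ref{relITRM}) the structure $L_\alpha[x]$ recognizes whether $P^x$ halts before $\alpha$. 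I would then show by induction on $n$ that if $P^x$ has not halted by $\omega^{CK,x}_{n+1}$, it diverges. In the inductive step one examines the behaviour of the ``last'' register $R_n$ over the interval $[\omega^{CK,x}_n,\omega^{CK,x}_{n+1})$: if $R_n$ is eventually bounded, then past some point the computation splits, according to the finitely many values of $R_n$, into $(n-1)$-register computations, and applying the induction hypothesis inside the next $x$-admissible shows the run must loop within $\omega^{CK,x}_n$ further steps, contradicting survival to $\omega^{CK,x}_{n+1}$; if instead $R_n$ is cofinally unbounded in $\omega^{CK,x}_{n+1}$, then it is reset to $0$ cofinally often, and reflecting at the admissible ordinal $\omega^{CK,x}_{n+1}$ yields two such reset times $\sigma<\tau$ with $c_\sigma=c_\tau$ and with every register value and the program line on $[\sigma,\tau]$ bounded below by its value at $\sigma$, whence by determinism and the liminf rule the computation is periodic from $\sigma$ onwards --- again divergence. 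The base case is direct, and the final clause of the theorem follows because any halting program uses only finitely many registers and $\omega^{CK,x}_\omega=\sup_n\omega^{CK,x}_n$.

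For the first assertion I would, for each fixed $n$, build an $ITRM$-program $Q$ with some fixed register number $N=N(n)>n$ that on input $i$ simulates $P^x_{i,n}$ step by step: a block of $n$ registers mirrors the simulated registers, while further registers hold the current program line and a ``snapshot'' configuration together with monitoring flags recording whether, since the snapshot was taken, each register has returned to a value $\le$ its snapshot value and the program line has returned to (a value $\le$) its snapshot value. When the simulated machine halts, $Q$ outputs the correct bit and stops on input $i$; when the flags witness that a self-reproducing configuration of the kind isolated in the proof of the time bound has recurred, $Q$ infers $P^x_{i,n}\uparrow$ and outputs accordingly. Since divergence of $P^x_{i,n}$ is, by that proof, equivalent to the eventual appearance of such a configuration, and a halting or looping run of an $n$-register program is exhausted well before $\omega^{CK,x}_{n+1}$, the program $Q$ halts on every input, so $Q^x\downarrow=H^x_n$; the construction does not mention $x$, which gives uniformity in the oracle.

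I expect the main obstacle to be the ``divergence pattern'' on which both parts rest: one must isolate a combinatorial repetition property of configuration sequences that is simultaneously (i) provably equivalent to divergence in the presence of the two liminf rules --- reset-to-$0$ for register contents and liminf for the program counter --- and (ii) monitorable by an $ITRM$ within its own, necessarily transfinite but here terminating, runtime. Together with the bookkeeping in the induction on $n$, in particular the passage between $n$- and $(n-1)$-register computations across successive $x$-admissibles, this is the technical core, and I would carry it out by relativizing the corresponding analysis of \cite{ITRM,ITRM2} to the oracle $x$, which affects only the admissible ordinals that occur and not the structure of the argument.
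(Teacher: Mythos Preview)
Your proposal is correct and matches the paper's own proof, which consists of a single sentence: ``The corresponding results from \cite{ITRM} easily relativize.'' You have simply unpacked what that relativization looks like --- the induction on the register number for the halting-time bound and the simulate-and-detect-loops construction for the bounded halting problem --- and explicitly noted at the end that you are relativizing the analysis of \cite{ITRM,ITRM2}, so there is no substantive difference in approach.
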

\begin{proof}
 The corresponding results from \cite{ITRM} easily relativize. 
\end{proof}

We will freely use the following standard proposition:

\begin{prop}
 Let $X\subseteq[0,1]\times[0,1]$ and $\tilde{X}:=\{x\oplus y\mid (x,y)\in X\}$. Then $X$ is meager/comeager/non-meager iff $\tilde{X}$ is meager/comeager/non-meager.
\end{prop}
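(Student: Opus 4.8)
The plan is to observe that the pairing map $\phi\colon\mathfrak{P}(\omega)\times\mathfrak{P}(\omega)\to\mathfrak{P}(\omega)$, $\phi(x,y)=x\oplus y$, is a homeomorphism, and then to invoke the elementary fact that homeomorphisms and their inverses preserve nowhere dense sets --- hence meager sets, and hence (being bijections) also comeager and non-meager sets. Throughout, as is customary in this context, we identify $[0,1]$ with Cantor space $2^{\omega}=\mathfrak{P}(\omega)$ carrying the product topology, so that $[0,1]\times[0,1]$ carries the product topology whose basic open sets are the rectangles $[u]\times[v]$ for finite binary strings $u,v$, where $[s]:=\{z\subseteq\omega\mid s\subseteq z\}$ is the cylinder determined by $s$.

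First I would record the explicit formulas $(x\oplus y)(2n)=x(n)$, $(x\oplus y)(2n+1)=y(n)$, together with the inverse $z\mapsto(\{n\mid 2n\in z\},\{n\mid 2n+1\in z\})$; this shows $\phi$ is a bijection. Next I would verify that $\phi$ is continuous: for a cylinder $[s]$ the preimage $\phi^{-1}([s])$ is precisely the rectangle $[s_{\mathrm{ev}}]\times[s_{\mathrm{od}}]$, where $s_{\mathrm{ev}}$ and $s_{\mathrm{od}}$ list the entries of $s$ in even, respectively odd, positions; this is open. Then I would verify that $\phi$ is an open map: after writing a rectangle $[u]\times[v]$ as a finite union of rectangles $[u']\times[v']$ with $|u'|=|v'|$ (by extending the shorter string in all possible ways), its image under $\phi$ is a finite union of cylinders $[u'\oplus v']$, hence open; so $\phi^{-1}$ is continuous. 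Thus $\phi$ is a homeomorphism.

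Finally I would invoke: if $h\colon A\to B$ is a homeomorphism and $N\subseteq A$, then $\overline{h(N)}=h(\overline{N})$ and $\operatorname{int}(h(N))=h(\operatorname{int}(N))$, so $N$ is nowhere dense iff $h(N)$ is; taking countable unions, $N$ is meager iff $h(N)$ is meager; and since $h$ is a bijection $h(A\setminus N)=B\setminus h(N)$, so $N$ is comeager iff $h(N)$ is comeager, and $N$ is non-meager iff $h(N)$ is non-meager. Applying this with $h=\phi$ and $N=X$ (so that $h(N)=\tilde{X}$) yields the proposition. There is no genuine obstacle here; the only point deserving a word of care is the identification of $[0,1]$ with $2^{\omega}$, since the binary-expansion map fails to be injective at the dyadic rationals --- but that exceptional set is countable, hence meager, so it affects none of the three properties at issue, and in any event all the relevant arguments in this paper are carried out directly in $\mathfrak{P}(\omega)$.
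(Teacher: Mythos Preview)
Your argument is correct: the pairing map is a homeomorphism between $\mathfrak{P}(\omega)\times\mathfrak{P}(\omega)$ and $\mathfrak{P}(\omega)$, and homeomorphisms preserve each of the three Baire-category properties in question. The paper itself gives no proof at all for this proposition (the proof environment is left empty, treating the statement as a standard fact), so your write-up simply supplies the routine details the paper chose to omit; there is no alternative approach to compare against.
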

\begin{proof}\end{proof}

Most of our notation is standard. 
$L_{\alpha}[x]$ denotes the $\alpha$th level of G\"odel's constructible hierarchy relativized to $x$. For $a,b\subseteq\omega$, $a\oplus b$ denotes $\{p(i,j)\mid i\in a\wedge j\in b$, where
$p:\omega\times\omega\rightarrow\omega$ is Cantor's pairing function.

\section{Computability from random oracles}

In this section, we consider the question which reals can be computed by an $ITRM$ with an $ITRM$-random oracle. We start by recalling
 the following theorem from \cite{CaSc}. The intuition behind it is that, given a certain non-$ITRM$-real $x$, one has no chance of computing it from
some randomly chosen real $y$.

\begin{thm}{\label{ManyOracles}}
Let $x$ be a real, $Y$ be a set of reals such that $x$ is $ITRM$-computable from every $y\in Y$.\\
Then, if $X$ has positive Lebesgue measure or is Borel and non-meager, $x$ is $ITRM$-computable.
\end{thm}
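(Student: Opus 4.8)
The plan is to reduce to a single program by a counting argument and then to recover $x$ from a forcing computation which an $ITRM$ can perform, exploiting that $COMP=\mathfrak{P}(\omega)\cap L_{\omega_{\omega}^{CK}}$ by Theorem~\ref{relITRM}. (The set called $X$ in the statement is of course $Y$.)

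\emph{Step 1: reduction to one program.} Fix an enumeration $(P_{i}\mid i\in\omega)$ of all $ITRM$-programs and set $Y_{i}:=\{y\in Y\mid P_{i}^{y}\downarrow=x\}$, so $Y=\bigcup_{i\in\omega}Y_{i}$. By Theorem~\ref{hp} the relation ``$P_{i}^{y}(n)\downarrow=b$'' is $ITRM$-decidable uniformly in $y$, and since both Cohen forcing and random forcing preserve admissibility (so that passing to a generic oracle leaves every $\omega_{k}^{CK,y}$ unchanged), this computation is captured by the respective forcing relation; hence each set $\{y\mid P_{i}^{y}(n)\downarrow=b\}$ is comeager or meager below every basic clopen set, and so has the Baire property, and is Lebesgue measurable. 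Consequently, $Y$ being Borel in the second case and measurable in the first, every $Y_{i}$ has, respectively, the Baire property and is measurable. If $Y$ has positive measure then some $Y_{e}$ has positive measure, so by the Lebesgue density theorem there is a basic clopen set $[s]$ in which $Y_{e}$ has relative measure $>\tfrac12$; if $Y$ is Borel and non-meager then some $Y_{e}$ is non-meager, hence, having the Baire property, comeager in some basic clopen set $[s]$. Write $P:=P_{e}$; the finitely much data $e$ and $s$ will be built into the final program.

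\emph{Step 2: a definition of $x$.} For $n\in\omega$ put $A_{n}:=\{y\in[s]\mid P^{y}(n)\downarrow=1\}$. Every $y\in Y_{e}$ satisfies $n\in x\Leftrightarrow y\in A_{n}$; so in the category case $A_{n}\supseteq Y_{e}\cap[s]$ if $n\in x$, while $A_{n}\cap Y_{e}\cap[s]=\emptyset$ if $n\notin x$, and since $Y_{e}\cap[s]$ is comeager in $[s]$ and $[s]$ is not meager in itself we get
\[
 n\in x\quad\Longleftrightarrow\quad A_{n}\text{ is comeager in }[s].
\]
In the measure case the same dichotomy, together with the fact that $Y_{e}$ has relative measure $>\tfrac12$ in $[s]$, gives $n\in x\Leftrightarrow A_{n}$ has relative measure $>\tfrac12$ in $[s]$.

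\emph{Step 3: an $ITRM$ computes the right-hand side.} By Step~1, $A_{n}$ is comeager or meager below every basic clopen $[t]\subseteq[s]$, and below $[t]$ it is comeager exactly when $t$ forces ``$P^{\dot G}(n)\downarrow=1$'' in Cohen forcing over $L$ (equivalently over $L_{\omega_{\omega}^{CK}}$); hence $A_{n}$ is comeager in $[s]$ iff $s$ forces ``$P^{\dot G}(n)\downarrow=1$''. Because $P^{\dot G}(n)$ halts, if at all, before $\omega_{\omega}^{CK,\dot G}=\omega_{\omega}^{CK}$ (Theorem~\ref{hp} and admissibility preservation), this forcing fact is a $\Sigma$-statement about the levels $L_{\beta}$ with $\beta<\omega_{\omega}^{CK}$, hence decidable inside $L_{\omega_{\omega}^{CK}}$ and therefore $ITRM$-decidable, uniformly in $n$, by Theorem~\ref{relITRM}. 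Thus the $ITRM$-program which, with $e$ and $s$ built in, on input $n$ outputs $1$ iff $s$ forces ``$P^{\dot G}(n)\downarrow=1$'' computes $x$ by Step~2, which proves the theorem. The measure case is handled in the same way with the random real forcing relation, invoking Solovay's description of the Lebesgue measure of definable sets in terms of it.

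\emph{The main obstacle} is Step~3: one must verify that the forcing relation for ``$P^{\dot G}(n)\downarrow=1$'' (and its random analogue) is absolute between $L_{\omega_{\omega}^{CK}}$ and $V$ and is uniformly $ITRM$-computable. This rests on the admissibility-preservation of Cohen and random forcing — already used in Step~1 to obtain the Baire property and measurability of the relevant sets — and on the halting-time bound of Theorem~\ref{hp}, which confines these statements below $\omega_{\omega}^{CK}$; granting this, running the forcing computation on an $ITRM$ is routine, since by Theorem~\ref{relITRM} an $ITRM$ can decide membership in every $L_{\beta}$ with $\beta<\omega_{\omega}^{CK}$. The counting in Step~1 and the Baire-category/density manipulations in Step~2 are elementary.
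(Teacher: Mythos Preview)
The paper does not give its own proof of this theorem: it is explicitly ``recalled'' from \cite{CaSc} and stated without argument, so there is no in-paper proof to compare against. Your proposal is therefore being measured against the result as imported, not against anything the present paper supplies.

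On its merits, your outline follows exactly the strategy one expects (and which \cite{CaSc} carries out): pigeonhole to a single program and a basic clopen set where the relevant oracle set is comeager\slash of density $>\tfrac12$, read off the bits of $x$ by a majority\slash comeagerness criterion, and finally show that this criterion is $ITRM$-decidable by evaluating the Cohen (resp.\ random) forcing relation for the halting statement inside a suitable $L$-level. Two points deserve tightening. First, in Step~3 you should work over $L_{\omega_{k+1}^{CK}}$ (or a slightly larger provident level) where $k$ is the number of registers of the fixed program $P$, not over $L_{\omega_{\omega}^{CK}}$: the halting-time bound of Theorem~\ref{hp} gives $\omega_{k+1}^{CK,y}$, and it is codes for these bounded levels that an $ITRM$ can uniformly compute and search for generics in, exactly as the paper itself does in Lemma~15 and Corollary~\ref{decidemeasure}. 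Phrasing it as a $\Sigma$-fact about ``levels $L_{\beta}$ with $\beta<\omega_{\omega}^{CK}$'' is vaguer than necessary and risks running into the fact that $\omega_{\omega}^{CK}$ is not itself a convenient base for the forcing argument. Second, the measure case is dismissed in one line; the admissibility preservation for random reals and the $ITRM$-computability of the random forcing relation (via measure estimates on $\Delta_{1}^{1}$ sets) require genuine additional work compared to the Cohen case, and this is where most of the effort in \cite{CaSc} goes. Your category argument is essentially complete modulo the level bookkeeping; the measure argument is only gestured at.
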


\begin{corollary}{\label{Admpreserv}}
 Let $x$ be $ITRM$-random. Then, for all $i\in\omega$, $\omega_{i}^{CK,x}=\omega_{i}^{CK}$.
\end{corollary}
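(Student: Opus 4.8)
The plan is to show that $ITRM$-randomness forces enough Cohen-genericity to trigger the classical preservation of admissibility under forcing, and to combine this with the (oracle-independent) observation that passing to an oracle never creates new admissible ordinals at a non-admissible height.

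First I would record, with no reference to randomness, that for every $y\subseteq\omega$ and every $n$ one has $\omega_{n}^{CK,y}\geq\omega_{n}^{CK}$, and in fact each of the first $\omega$-many $y$-admissible ordinals is already admissible. This is standard; one argument: a limit ordinal $\beta>\omega$ with $L_{\beta}$ non-admissible contains a well-ordering $R$ of $\omega$ of order type $\beta$, so $R\in L_{\beta}[y]$ as well, and the rank function of $R$ is a surjection $\omega\to\beta$ that is $\Sigma_{1}$-definable over $L_{\beta}[y]$ with all witnesses bounded below $\beta$; hence $\Sigma_{1}$-collection fails in $L_{\beta}[y]$, so $\beta$ is not $y$-admissible. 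Consequently the $y$-admissibles below $\omega_{\omega}^{CK}$ form a subset of $\{\omega_{j}^{CK}\mid j\in\omega\}$.

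The $ITRM$-specific heart of the proof is that every $ITRM$-random $x$ meets every dense set $D\subseteq 2^{<\omega}$ with $D\in L_{\omega_{\omega}^{CK}}$. Such a $D$ is $ITRM$-computable by Theorem \ref{relITRM} (oracle $\emptyset$), so the open dense set $U_{D}=\bigcup_{s\in D}[s]$ is $ITRM$-decidable: on oracle $y$, first compute a code for $D$, then inspect $y\restriction 0, y\restriction 1,\dots$, halting with output $1$ the first time an initial segment lands in $D$ and with output $0$ at stage $\omega$ otherwise; by Theorem \ref{hp} this is a uniformly halting procedure. Its complement $2^{\omega}\setminus U_{D}$ is closed with empty interior, hence nowhere dense, so it is an $ITRM$-decidable meager set and the random real $x$ lies outside it; that is, $x$ meets $D$. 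In particular $x$ is Cohen-generic over $L_{\omega_{n}^{CK}}$ for every $n$.

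Finally I would invoke the standard theorem that forcing with a notion of forcing lying in a model of $KP$ preserves $KP$: since $L_{\omega_{n}^{CK}}\models KP$, $2^{<\omega}\in L_{\omega}\subseteq L_{\omega_{n}^{CK}}$, and $x$ is generic over $L_{\omega_{n}^{CK}}$, we get $L_{\omega_{n}^{CK}}[x]\models KP$, i.e. $\omega_{n}^{CK}$ is $x$-admissible. Together with the first step this shows that the $x$-admissible ordinals below $\omega_{\omega}^{CK}$ are exactly $\omega_{1}^{CK}<\omega_{2}^{CK}<\cdots$, whence $\omega_{n}^{CK,x}=\omega_{n}^{CK}$ for all $n$. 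The point I expect to cost the most care is the set-theoretic bookkeeping --- quoting the right form of ``forcing over a model of $KP$ preserves $KP$'' and of the characterization of non-admissible limit ordinals used above, and verifying that the procedure deciding $U_{D}$ really witnesses $ITRM$-decidability in the sense of Definition \ref{randomdef} --- whereas the genuinely new ingredient, that a random real meets all $ITRM$-decidable dense sets, is comparatively soft.
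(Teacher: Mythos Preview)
Your proof is correct and takes a genuinely different route from the paper's. The paper argues by exhibiting, for each $i$ and each program $P$, an $ITRM$-decidable meager set containing every $x$ for which $P^{x}$ computes a code for an $i$th $x$-admissible strictly above $\omega_{i}^{CK}$; establishing decidability requires uniformly computing in the oracle $x$ a real code for $L_{\omega_{i+1}^{CK,x}}[x]$, locating the elements coding $\omega_{i}^{CK}$ and $\omega_{i}^{CK,x}$ inside it, and comparing them --- a fairly heavy piece of $ITRM$ programming drawing on the techniques of \cite{Ca},\cite{Ca2}. You instead show directly that an $ITRM$-random real meets every dense $D\in L_{\omega_{\omega}^{CK}}$ (since $2^{\omega}\setminus U_{D}$ is nowhere dense and $ITRM$-decidable once $D$ is $ITRM$-computable), hence is Cohen-generic over each $L_{\omega_{n}^{CK}}$, and then invoke preservation of $KP$ under Cohen forcing. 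Your route is cleaner and yields as a by-product the useful statement ``$ITRM$-random $\Rightarrow$ Cohen-generic over $L_{\omega_{\omega}^{CK}}$'', at the price of importing the forcing-preserves-$KP$ theorem; the paper's route stays closer to pure $ITRM$ computability. One small remark on your first step: the claim that every non-admissible limit $\beta>\omega$ has a well-ordering of $\omega$ of type $\beta$ inside $L_{\beta}$ is not literally true in general (it fails inside gaps of the $L$-hierarchy), though it does hold below $\omega_{\omega}^{CK}$, which is all you actually use; the slicker justification of ``$y$-admissible $\Rightarrow$ admissible'' is that $L_{\alpha}$ is a $\Delta_{1}$-definable inner model of $L_{\alpha}[y]$ and $KP$ transfers down to such inner models.
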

\begin{proof}
Lemma $46$ of \cite{CaSc} shows that $\omega_{i}^{CK,x}=\omega_{i}^{CK}$ for all $i\in\omega$ whenever $x$ is Cohen-generic over $L_{\omega_{\omega}^{CK}}$ (see e.g. \cite{CaSc} or \cite{Ma})
 and that the set of Cohen-generics over $L_{\omega_{\omega}^{CK}}$
is comeager. Hence $\{x|\omega_{i}^{CK,x}>\omega_{i}^{CK}\}$ is meager.
For each program $P$, the set of reals $x$ such that $P^{x}$ computes a code for the $i$th $x$-admissible which is greater than $\omega_{i}^{CK}$ is decidable using the techniques 
developed in \cite{Ca} and \cite{Ca2}. (The idea is to uniformly in the oracle $x$ compute a real $c$ coding $L_{\omega_{i+1}^{CK,x}}[x]$ in which the natural numbers $m$ and $n$ coding
$\omega_{i}^{CK}$ and $\omega_{i}^{CK,x}$ can be identified in the oracle $x$, then check - using a halting problem solver
for $P$, see Theorem \ref{hp} - whether $P^{x}$ computes a well-ordering of the same order type as the element of $L_{\omega_{i+1}^{CK,x}}[x]$ coded by $k$ and 
finally whether the element coded by $m$ is an element of that coded by $n$.)
Hence, if $x$ is $ITRM$-random, then there can be no $ITRM$-program $P$ computing such a code in the oracle $x$. But a code for $\omega_{i}^{CK,x}$ is
$ITRM$-computable in the oracle $x$ for every real $x$ and every $i\in\omega$. Hence, we must have $\omega_{i}^{CK,x}=\omega_{i}^{CK}$ for every $i\in\omega$, 
as desired.
\end{proof}

\begin{lemma}{\label{compandgen}}
Let $a\subseteq\omega$ and suppose that $z$ is Cohen-generic over $L_{\omega_{\omega}^{CK,a}+1}[a]$. Then $a\leq_{ITRM}z$ iff $a$ is $ITRM$-computable.
Moreover, the set $C_{a}:=\{z\subseteq\omega\mid z\text{ is Cohen-generic over }L_{\omega_{\omega}^{CK,a}+1}[a]\}$ is comeager. Consequently, $S_{a}:=\{z\subseteq\omega\mid a\leq_{ITRM}z\}$ is meager whenever $a$ is not $ITRM$-computable.
\end{lemma}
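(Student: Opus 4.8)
The plan is to prove the biconditional first and then read off the two further claims. The implication ``$a\in COMP\Rightarrow a\leq_{ITRM}z$'' is trivial, since a program computing $a$ works in any oracle. For the converse, set $\lambda:=\omega_{\omega}^{CK,a}$, suppose $z$ is Cohen-generic over $L_{\lambda+1}[a]$ and $a\leq_{ITRM}z$, and let $P$, using say $n$ registers, witness $P^{z}\downarrow=a$; the goal is $a\in COMP$. The first step is to bound the admissibles of $z$. Since $L_{\omega_{j}^{CK,a}}[a]\subseteq L_{\lambda+1}[a]$ for each $j\in\omega$, the real $z$ is Cohen-generic over each admissible set $L_{\omega_{j}^{CK,a}}[a]$, and as Cohen forcing lies in $L_{\omega}$, the standard fact that a set-generic extension of an admissible set is again admissible gives that $L_{\omega_{j}^{CK,a}}[a\oplus z]=L_{\omega_{j}^{CK,a}}[a][z]$ is admissible; hence $\omega_{j}^{CK,z}\leq\omega_{j}^{CK,a\oplus z}\leq\omega_{j}^{CK,a}$ for every $j$. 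By Theorem~\ref{hp}, a convergent computation $P^{y}(i)$ with $P$ using $n$ registers and $y$ Cohen-generic over $L_{\lambda+1}[a]$ then halts in fewer than $\rho:=\omega_{n+1}^{CK,a}$ steps; in particular this holds for $y=z$.

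Next I extract a single condition. By the forcing theorem for genericity over $L_{\lambda+1}[a]$ (cf.\ \cite{CaSc}) --- legitimate here because, by the previous step, the statement below is true of $z$ and involves only computations of the fixed length $\rho<\lambda$ --- there is a condition $p$ in the Cohen filter determined by $z$ with
\[
 p\Vdash\ \text{``for every }i\in\omega,\ P^{\dot z}(i)\text{ halts within }\check\rho\text{ steps with output }a(i)\text{''.}
\]
Put
\[
 B:=\{\,y\subseteq\omega\ \mid\ y\in[p]\ \text{ and }\ P^{y}(i)\text{ halts within }\rho\text{ steps with output }a(i)\ \text{ for every }i\in\omega\,\},
\]
where $[p]$ is the clopen set of reals extending $p$. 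Since $\rho$ is a fixed countable ordinal and running $P$ with oracle $y$ for $\rho$ steps is a transfinite recursion of fixed length, the defining condition on $y$ is Borel uniformly in $i$, so $B$ is Borel. And $B$ is non-meager: any $y\in[p]$ that is Cohen-generic over $L_{\lambda+1}[a]$ lies in $B$, because $p$ forces $P^{\dot z}$ to output $a$ and, by the admissible bound above, $P^{y}(i)$ halts within $\rho$ steps; such $y$ are comeager in $[p]$, since $L_{\lambda+1}[a]$ is countable and hence has only countably many dense sets.

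Every $y\in B$ satisfies $P^{y}\downarrow=a$, so $a$ is $ITRM$-computable from every member of the Borel, non-meager set $B$; Theorem~\ref{ManyOracles} now yields $a\in COMP$, completing the biconditional. The comeagerness of $C_{a}$ is the same countability-of-dense-sets observation already used. Finally, if $a\notin COMP$ then the biconditional shows $S_{a}\cap C_{a}=\emptyset$, so $S_{a}\subseteq\mathfrak{P}(\omega)\setminus C_{a}$ is meager.

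The one genuinely delicate point is that $S_{a}=\{\,y\mid a\leq_{ITRM}y\,\}$ is presumably not Borel --- ``$P^{y}$ halts'' is not a Borel condition on $y$, because the relevant halting time $\omega_{\omega}^{CK,y}$ varies with $y$ --- so Theorem~\ref{ManyOracles} cannot be applied to $S_{a}$ directly. The purpose of the first two steps is precisely to manufacture a Borel, non-meager substitute $B$ by freezing the halting bound at the single ordinal $\rho=\omega_{n+1}^{CK,a}$, and the admissible-preservation argument is what makes this freezing harmless, certifying that all the generic oracles we care about really do halt within $\rho$ steps. A lesser issue is the appeal to a forcing theorem over the non-$ZF$ level $L_{\lambda+1}[a]$ to pin down $p$, which is unproblematic for the bounded statement at hand.
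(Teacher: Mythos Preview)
Your argument is correct, but it follows a different route from the paper's. After extracting a condition $p$ via the forcing theorem, the paper does \emph{not} build a Borel set and invoke Theorem~\ref{ManyOracles}. Instead it picks two reals $x,y$ that are \emph{mutually} Cohen-generic over $L_{\omega_{\omega}^{CK,a}+1}[a]$, both extending $p$; then $P^{x}\downarrow=a=P^{y}\downarrow$, so $a\in L_{\omega_{\omega}^{CK,x}}[x]\cap L_{\omega_{\omega}^{CK,y}}[y]$. Using admissible preservation to replace $\omega_{\omega}^{CK,x}$ and $\omega_{\omega}^{CK,y}$ by $\omega_{\omega}^{CK}$, and then the intersection lemma $L_{\alpha}[x]\cap L_{\alpha}[y]=L_{\alpha}$ for mutually Cohen-generic $x,y$ over a provident $L_{\alpha}$ (Lemma~28 of \cite{CaSc}), one gets $a\in L_{\omega_{\omega}^{CK}}$ directly.

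So the comparison is: the paper trades your ``freeze the halting bound at $\rho$, build a Borel non-meager $B$, apply Theorem~\ref{ManyOracles}'' manoeuvre for a single structural fact about mutual generics. The paper's route is shorter and avoids any analysis of the complexity of the oracle set, but imports the intersection lemma from \cite{CaSc}; your route is more self-contained relative to the present paper (Theorem~\ref{ManyOracles} is already stated here), at the price of the extra work bounding halting times and checking that $B$ is Borel. Your closing remark about why $S_{a}$ cannot be fed into Theorem~\ref{ManyOracles} directly is on point and explains why the detour through $B$ is necessary in your approach.
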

\begin{proof}
Assume that $z$ is Cohen-generic over $L_{\omega_{\omega}^{CK,a}+1}[a]$ and $a\leq_{ITRM}z$. By the forcing theorem for provident sets (see e.g. Lemma $32$ of \cite{CaSc}), there is an $ITRM$-program $P$ and
a forcing condition $p$ such that $p\Vdash P^{\dot{G}}\downarrow=$\v{a}, where $\dot{G}$ is the canonical name for the generic filter and \v{a} is the canonical name of $a$.
 Now, let $y$ and $z$ be mutually Cohen-generic over $L_{\omega_{\omega}^{CK,a}+1}[a]$ both extending $p$.
Again by the forcing theorem and by absoluteness of computations, we must have $P^{x}\downarrow=a=P^{y}\downarrow$, so $a\in L_{\omega_{\omega}^{CK,x}}[x]\cap L_{\omega_{\omega}^{CK,y}}[y]$.
By Corollary \ref{Admpreserv}, $\omega_{\omega}^{CK,x}=\omega_{\omega}^{CK,y}=\omega_{\omega}^{CK}$. By Lemma $28$ of \cite{CaSc}, we have $L_{\alpha}[x]\cap L_{\alpha}[y]=L_{\alpha}$ whenever
$x$ and $y$ are mutually Cohen-generic over $L_{\alpha}$ and $\alpha$ is provident (see \cite{Ma}). Consequently, we have\\ $a\in L_{\omega_{\omega}^{CK,x}}[x]\cap L_{\omega_{\omega}^{CK,y}}[y]=L_{\omega_{\omega}^{CK}}[x]\cap L_{\omega_{\omega}^{CK}}[y]
=L_{\omega_{\omega}^{CK}}$, so $a$ is $ITRM$-computable.\\
The comeagerness of $C_{a}$ is standard (see e.g. Lemma $29$ of \cite{CaSc}). To see that $S_{a}$ is meager for non-$ITRM$-computable $a$, observe that the Cohen-generic reals over $L_{\omega_{\omega}^{CK,a}+1}[a]$
form a comeager set of reals to non of which $a$ is reducible.
\end{proof}

\begin{defini}
Let $x,y\subseteq\omega$. Then $x$ is $ITRM$-random relative to $y$ iff there is no meager set $X$ such that $x\in X$ and $X$ is $ITRM$-decidable in the oracle $y$.
If $x$ is $ITRM$-random relative to $y$ and $y$ is $ITRM$-random relative to $x$, we say that $x$ and $y$ are mutually $ITRM$-random.
\end{defini}

Intuitively, we should expect that mutually random reals have no non-trivial information in common. This is expressed by the following theorem:

\begin{thm}{\label{mutuallyrandom}}
 If $z$ is $ITRM$-computable from two mutually $ITRM$-random reals $x$ and $y$, then $z$ is $ITRM$-computable.
\end{thm}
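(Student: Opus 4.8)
The strategy mirrors the proof of Lemma \ref{compandgen}, but now we have to run the forcing argument "from both sides" simultaneously. Suppose $z \leq_{ITRM} x$ via program $P$ and $z \leq_{ITRM} y$ via program $Q$. By Corollary \ref{Admpreserv}, since $x$ and $y$ are each (in particular) $ITRM$-random, we have $\omega_i^{CK,x} = \omega_i^{CK,y} = \omega_i^{CK}$ for all $i$, hence $z \in L_{\omega_\omega^{CK}}[x] \cap L_{\omega_\omega^{CK}}[y]$. If I can show that $x$ and $y$ may be taken to be (or behave like) mutually Cohen-generic reals over $L_{\omega_\omega^{CK}}$, then Lemma $28$ of \cite{CaSc} gives $L_{\omega_\omega^{CK}}[x] \cap L_{\omega_\omega^{CK}}[y] = L_{\omega_\omega^{CK}}$, so $z \in L_{\omega_\omega^{CK}} = COMP$ by Theorem \ref{relITRM}, and we are done.

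First I would argue that if $x$ is $ITRM$-random relative to $y$, then $x$ is Cohen-generic over $L_{\omega_\omega^{CK,y}+1}[y]$: the complement of the set of such generics is, by the arguments of \cite{CaSc} (Lemma $29$) together with Corollary \ref{Admpreserv}, a meager set, and it is $ITRM$-decidable in the oracle $y$ — one decides, given a real, whether it meets every dense set coded in $L_{\omega_\omega^{CK,y}+1}[y]$, using a code for that level computable from $y$ (as in the proof of Corollary \ref{Admpreserv}) and a halting-problem solver from Theorem \ref{hp}. So randomness relative to $y$ forces genericity over $L[y]$ up to the relevant level. Symmetrically $y$ is Cohen-generic over $L_{\omega_\omega^{CK,x}+1}[x]$. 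Using $\omega_\omega^{CK,x} = \omega_\omega^{CK,y} = \omega_\omega^{CK}$, both $x$ and $y$ are Cohen-generic over $L_{\omega_\omega^{CK}+1}$, and the product/mutual-genericity will follow because, e.g., $y$ being generic over $L_{\omega_\omega^{CK}+1}[x]$ means exactly that $x \oplus y$ (equivalently the pair $(x,y)$) is generic for the product forcing over $L_{\omega_\omega^{CK}}$, which is the hypothesis needed for Lemma $28$ of \cite{CaSc}.

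Then I would invoke the forcing theorem for provident sets (Lemma $32$ of \cite{CaSc}) as in Lemma \ref{compandgen}: there are conditions $p, q$ with $p \Vdash P^{\dot G}\!\downarrow = \check z$ and $q \Vdash Q^{\dot G}\!\downarrow = \check z$; since $z$ itself need not be in the ground model, what I actually get is that $z$ is definable over $L_{\omega_\omega^{CK}}$ from $x$ and the name $P$, and likewise from $y$; mutual genericity then pins $z$ down inside $L_{\omega_\omega^{CK}}$ via the intersection lemma. The main obstacle I anticipate is the bookkeeping around "which level": I must make sure that the level $L_{\omega_\omega^{CK,y}+1}[y]$ over which $x$ is generic, and the corresponding level for $y$, really coincide after applying Corollary \ref{Admpreserv}, and that providence of $\omega_\omega^{CK}$ (needed for both the forcing theorem and the intersection lemma) is in place — these are exactly the hypotheses of the cited lemmas of \cite{CaSc}, so once the genericity-from-randomness step is nailed down the rest is assembly. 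A secondary subtlety is that "$X$ is $ITRM$-decidable in the oracle $y$" must be checked to be preserved under the coding tricks; this is routine given Theorem \ref{hp} and the methods of \cite{Ca, Ca2} already used above.
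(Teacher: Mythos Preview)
Your plan has a gap at the key decidability claim. You assert that the set of reals failing to be Cohen-generic over $L_{\omega_{\omega}^{CK,y}+1}[y]$ is $ITRM$-decidable in the oracle $y$, citing the proof of Corollary~\ref{Admpreserv} for the computability of a code for that level. But Corollary~\ref{Admpreserv} (and the methods of \cite{Ca,Ca2}) only produce, uniformly in the oracle, codes for $L_{\omega_{i+1}^{CK,x}}[x]$ with a \emph{fixed} finite $i$; by Theorem~\ref{relITRM} the reals computable from $y$ are exactly those in $L_{\omega_{\omega}^{CK,y}}[y]$, and no real there codes $L_{\omega_{\omega}^{CK,y}}[y]$ itself, let alone the next level. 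A single $ITRM$-program has a fixed register count and, by Theorem~\ref{hp}, cannot uniformly reach levels cofinal in $\omega_{\omega}^{CK,y}$. The gap is reparable: for each fixed $n$ the set of non-generics over $L_{\omega_{n}^{CK,y}+1}[y]$ \emph{is} meager and decidable in $y$, so a real that is $ITRM$-random relative to $y$ avoids each of these sets and is therefore generic over $L_{\omega_{\omega}^{CK,y}}[y]=\bigcup_{n}L_{\omega_{n}^{CK,y}+1}[y]$; combined with Corollary~\ref{Admpreserv} this yields the mutual genericity over $L_{\omega_{\omega}^{CK}}$ you want, after which the intersection lemma applies directly (your forcing-theorem paragraph is then superfluous, since $z\in L_{\omega_{\omega}^{CK}}[x]\cap L_{\omega_{\omega}^{CK}}[y]$ already follows from Theorem~\ref{relITRM}).

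The paper's proof takes a different and shorter route that never establishes genericity of $x$ or $y$. It fixes a total program $P$ with $P^{y}\downarrow=z$ and considers $A_{z}=\{a\mid P^{a}\downarrow=z\}$. This set is decidable in the oracle $z$ (compare $P^{a}$ bit by bit with $z$), hence decidable in $x$ since $z\leq_{ITRM}x$; and it is meager by Lemma~\ref{compandgen} once $z$ is assumed non-computable. But $y\in A_{z}$, contradicting the $ITRM$-randomness of $y$ relative to $x$. The point is that only one meager test, tied to the specific program $P$, needs to be decided, so no register-count obstruction arises. Your genericity approach, once patched, gives the extra information that mutually $ITRM$-random reals are in fact mutually Cohen-generic over $L_{\omega_{\omega}^{CK}}$, at the cost of a longer argument.
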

\begin{proof}
Assume otherwise, and suppose that $z$, $x$ and $y$ constitute a counterexample. By assumption, $z$ is computable from $x$. Also, let $P$ be a program such that $P^{a}(i)\downarrow$ for every $a\subseteq\omega$, $i\in\omega$ and such that $P$
computes $z$ in the oracle $y$. In the oracle $z$, the set $A_{z}:=\{a|\forall{i\in\omega}P^{a}(i)\downarrow=z(i)\}$ is decidable by simply computing $P^{a}(i)$ for all $i\in\omega$ and comparing the result to the $i$th bit of $z$.
Clearly, we have $A_{z}\subseteq\{a\mid z\leq_{ITRM}a\}$. Hence, by our Theorem \ref{compandgen} above, $A_{z}$ is meager as $z$ is not $ITRM$-computable by assumption. 
Since $A_{z}$ is decidable in the oracle $z$ and $z$ is computable from $x$, $A_{z}$ is also decidable in the oracle $x$. Now, $x$ and $y$ are 
mutually $ITRM$-random, so that $y\notin A_{z}$. But $P$ computes $z$ in the oracle $y$, so $y\in A_{z}$ by definition, a contradiction.
\end{proof}

While, naturally, there are non-computable reals that are reducible to a random real $x$ (such as $x$ itself), intuitively, it should not be possible to compute a non-arbitrary real from a random real.
We approximate this intuition by taking `non-arbitrary' to mean `$ITRM$-recognizable' (see \cite{ITRM2} or \cite{Ca} for more information on $ITRM$-recognizability).
 It turns out that, in accordance with this intuition, recognizables that are $ITRM$-computable from $ITRM$-random reals are already $ITRM$-computable.

\begin{defini}
$x\subseteq\omega$ is $ITRM$-recognizable iff there is an $ITRM$-program $P$ such that $P^{y}\downarrow=1$ iff $y=x$ and $P^{y}\downarrow=0$, otherwise.
\end{defini}

\begin{thm}{\label{weakrandomreducibility}}
 Let $x\in RECOG$ and let $y$ be $ITRM$-random such that $x\leq_{ITRM}y$. Then $x$ is $ITRM$-computable.
\end{thm}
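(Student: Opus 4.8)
The plan is to exploit the fact that a recognizable real $x$ is definable by the property "$P^x\!\downarrow=1$'', so that membership in $\{x\}$ is an $ITRM$-decidable event once we have access to enough of the constructible hierarchy. The strategy mirrors the proof of Theorem~\ref{mutuallyrandom}: produce an $ITRM$-decidable meager set containing $y$, contradicting the $ITRM$-randomness of $y$.

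First I would argue that $x$ is $ITRM$-computable from $\emptyset$ if and only if $x\in L_{\omega_\omega^{CK}}$; since $x\le_{ITRM}y$, Theorem~\ref{relITRM} gives $x\in L_{\omega_\omega^{CK,y}}[y]$, and by Corollary~\ref{Admpreserv}, $\omega_i^{CK,y}=\omega_i^{CK}$ for all $i$ because $y$ is $ITRM$-random. Hence $x\in L_{\omega_\omega^{CK}}[y]$. The remaining task is to remove the oracle $y$, i.e.\ to show $x\in L_{\omega_\omega^{CK}}$. Suppose not. Let $Q$ be the recognizing program for $x$ and let $R$ be a program witnessing $x\le_{ITRM}y$. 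Consider the set
\[
X:=\{z\subseteq\omega\mid R^{z}\!\downarrow=:x_z \text{ exists and } Q^{x_z}\!\downarrow=1\}.
\]
By the argument above, every $z\in X$ that additionally preserves the admissibles (which, being comeager, we may intersect in) computes the \emph{same} real, namely the unique $w$ with $Q^{w}\!\downarrow=1$, which is our $x$. So $X$ is contained, modulo a comeager set, in $S_x=\{z\mid x\le_{ITRM}z\}$, which by Lemma~\ref{compandgen} is meager since $x$ is assumed non-$ITRM$-computable.

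Next I would check that $X$ is $ITRM$-decidable (from $\emptyset$): on input $z$, use a halting-problem solver (Theorem~\ref{hp}) to decide whether $R^{z}$ halts on all inputs; if so, compute the real $x_z=R^z\!\downarrow$ bit by bit into an oracle, then simulate $Q^{x_z}$ and output its answer; if $R^z$ does not halt on all inputs, output $0$. Intersecting with the $ITRM$-decidable comeager set of reals preserving the admissibles (decidability of this set is exactly the content of the argument in the proof of Corollary~\ref{Admpreserv}) yields an $ITRM$-decidable meager set containing $y$, contradicting the $ITRM$-randomness of $y$. Therefore $x$ is $ITRM$-computable.

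The main obstacle I expect is the uniform decidability of "$R^z$ halts on all inputs and $Q^{R^z}\!\downarrow=1$'': one must be careful that the composed computation $Q^{x_z}$ only queries finitely much of $x_z$ before halting on each input, and that the halting of $R^z$ on all of $\omega$ can genuinely be decided by a single $ITRM$-program via the bounded halting problem --- this is where Theorem~\ref{hp} (uniform solvability of $H_n^z$, together with the $\omega_{n+1}^{CK,z}$ time bound) does the work. A secondary point is making precise that the comeager "admissible-preserving'' set can be intersected in while keeping decidability; this is handled exactly as in the proof of Corollary~\ref{Admpreserv}.
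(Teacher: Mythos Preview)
Your proposal is correct and follows essentially the same route as the paper: define the set of oracles $z$ for which the reducing program outputs a real accepted by the recognizer $Q$, argue this set is $ITRM$-decidable via the bounded halting problem (Theorem~\ref{hp}) and meager via Lemma~\ref{compandgen}, and derive a contradiction since it contains $y$. The detour through Corollary~\ref{Admpreserv} and the intersection with the admissible-preserving reals is unnecessary, however: since $Q$ recognizes exactly $x$, any $z\in X$ already satisfies $R^{z}\!\downarrow=x$ and hence $z\in S_{x}$ outright, so $X\subseteq S_{x}$ is meager with no further restriction needed.
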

\begin{proof}
Let $x\in RECOG-COMP$ be computable from $y$, say by program $P$ and let $Q$ be a program that recognizes $x$. 
The set $S:=\{z\mid P^{z}\downarrow=x\}$ is meager as in the proof of Theorem \ref{mutuallyrandom}. But $S$ is decidable:
Given a real $z$, use a halting-problem solver for $P$ (which exists uniformly in the oracle by Theorem \ref{hp}) to test whether $P^{z}(i)\downarrow$ for all $i\in\omega$; if not,
then $z\notin S$. Otherwise, use $Q$ to check whether the real computed by $P^{z}$ is equal to $x$. If not, then $z\notin S$, otherwise $z\in S$. As $P^{y}$ computes $x$,
it follows that $y\in S$, so that $y$ is an element of an $ITRM$-decidable meager set. Hence $y$ is not $ITRM$-random, a contradiction.
\end{proof}

\textbf{Remark}: Let $(P_{i}|i\in\omega)$ be a natural enumeration of the $ITRM$-programs.
 Together with the fact that the halting number $h=\{i\in\omega\mid P_{i}\downarrow\}$ for $ITRM$s is recognizable (see \cite{Ca2}), this implies in particular that the halting problem for $ITRM$s is not 
$ITRM$-reducible to an $ITRM$-random real.\\

It follows from Theorem \ref{hp} that the computational strength of $ITRM$s increases strictly with the number of registers. Let us say that $x$ is $ITRM$-$n$-random for $n\in\omega$
iff there is no meager set $X\ni x$ which is decidable by an $ITRM$ using at most $n$ registers. One should expect that stronger information extraction methods lead to a stronger notion or randomness, which is indeed the case:
The following theorem, together with Theorem \ref{mutuallyrandom}, shows that this notion becomes strictly
stronger with the number of registers. It also implies that for each $n$, there is $m>n$ such that a non-$ITRM_m$-computable real $x$ is
$ITRM_m$-computable from two mutually $ITRM_n$-random reals $y$ and $z$ and consequently, that there is no universal test for $ITRM$-randomness, i.e. the set of $ITRM$-random reals is not $ITRM$-decidable.

\begin{thm}{\label{RegNum}}
For each $n\in\omega$, there are $x,y,z\subseteq\omega$ such that the following holds:
$y$ and $z$ are mutually random for $ITRM$s with $n$ registers, $x$ is not $ITRM$-computable and
$x$ is $ITRM$-computable from $y$ and $z$. In fact, for every constructible real $x$ and every $n\in\omega$, there are reals
$y$ and $z$ and $m\in\omega$ such that $y$ and $z$ are mutually $ITRM_n$-random and $x$ is $ITRM_m$-computable
both from $y$ and $z$.
\end{thm}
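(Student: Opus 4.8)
The first assertion is the special case of the ``in fact'' part obtained by fixing, once and for all, a constructible real $x_{0}\notin COMP$ --- such reals exist by Theorem \ref{relITRM}, for instance a real coding a well-order of $\omega$ of order type $\omega_{\omega}^{CK}$ --- applying the second statement to $x_{0}$ and the given $n$, and noting that $ITRM_{m}$-computability implies $ITRM$-computability. So it suffices to prove the ``in fact'' part. Fix a constructible real $x$ and $n\in\omega$, and fix a countable ordinal $\delta>\omega_{n+1}^{CK}$ with $x\in L_{\delta}$.

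The plan is to obtain $y$ and $z$ as a mutually generic pair for a single forcing notion $\mathbb{Q}\in L$ that is ``shallow below $\omega_{n+1}^{CK}$ but deep above it''. Concretely, $\mathbb{Q}$ is to be chosen with two properties. First, whenever $\mathbb{Q}$ is used to force over a transitive model $M\supseteq L_{\omega_{n+1}^{CK}+1}$, the generic real $g$ satisfies $\omega_{j}^{CK,g}=\omega_{j}^{CK}$ for every $j\le n+1$, and $g$ lies outside every meager Borel set coded in $M$; in other words, below $\omega_{n+1}^{CK}$ the forcing behaves like Cohen forcing. Second, $\mathbb{Q}$ generically codes a well-order of $\omega$ of order type $\delta$, together with an index locating $x$ inside $L_{\delta}$, into its generic real $g$, in such a way that recovering this information is tantamount to building the hierarchy $\langle L_{\beta}[g]\mid\beta\le\delta\rangle$; consequently forcing with $\mathbb{Q}$ gives $\omega_{n+2}^{CK,g}>\delta$ and $x\in L_{\omega_{n+2}^{CK,g}}[g]$. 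A forcing with these features is produced by a Jensen-type coding construction carried out one admissible level up, i.e. coding over $L_{\omega_{n+1}^{CK}}$ rather than over $\omega$. Given such a $\mathbb{Q}$, let $(y,z)$ be $\mathbb{Q}\times\mathbb{Q}$-generic over $L$ (or over a sufficiently large countable $L_{\Omega}$), and let $m\ge n+1$ be large enough to accommodate the decoding procedure used below; this $m$ depends only on $n$.

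Three things remain to be checked. For $x\leq_{ITRM_{m}}y$ (and symmetrically for $z$): by the second property of $\mathbb{Q}$ we have $\delta<\omega_{n+2}^{CK,y}\le\omega_{m+1}^{CK,y}$, so by Theorem \ref{hp} an $ITRM$ with $m$ registers and oracle $y$ has enough running time to build, level by level, a real coding $L_{\delta}[y]$, and then to read off $x\in L_{\delta}\subseteq L_{\delta}[y]$ using the coded index; the legitimacy of this computation is Theorem \ref{relITRM}. That $x$ is not $ITRM$-computable is the choice of $x_{0}$ for the first assertion, and is vacuous for the ``in fact'' part. Finally, for mutual $ITRM_{n}$-randomness we first note the register-bounded, relativized analogue of Lemma \ref{compandgen} and Corollary \ref{Admpreserv}: if $w$ is Cohen-generic over $L_{\omega_{n+1}^{CK,a}+1}[a]$ then $w$ is $ITRM_{n}$-random relative to $a$, and the set of such $w$ is comeager. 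The proof is the same as before: by Theorem \ref{hp} every $ITRM_{n}$-computation in the oracle $a$ halts in fewer than $\omega_{n+1}^{CK,a}$ steps, so any $ITRM_{n}$-in-$a$-decidable meager set is contained in a meager Borel set with a code in $L_{\omega_{n+1}^{CK,a}+1}[a]$, which the generic $w$ then avoids. Now, by the first property of $\mathbb{Q}$ we have $\omega_{n+1}^{CK,z}=\omega_{n+1}^{CK}$; since $(y,z)$ is $\mathbb{Q}\times\mathbb{Q}$-generic, $y$ is $\mathbb{Q}$-generic over $L_{\Omega}[z]\supseteq L_{\omega_{n+1}^{CK}+1}[z]=L_{\omega_{n+1}^{CK,z}+1}[z]$, and hence (again by the first property of $\mathbb{Q}$) $y$ avoids every meager Borel set coded there; by the displayed lemma $y$ is $ITRM_{n}$-random relative to $z$. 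By symmetry $z$ is $ITRM_{n}$-random relative to $y$, and the proof is complete.

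The heart of the argument is the construction of the forcing $\mathbb{Q}$: one must balance its first property --- tameness below $\omega_{n+1}^{CK}$, in particular the Cohen-like avoidance of ground-model meager sets, which is exactly what keeps the generics $ITRM_{n}$-random --- against its second property, which demands that $\mathbb{Q}$ destroy the admissibility (in $L[g]$) of every old admissible lying in the interval $(\omega_{n+1}^{CK},\delta]$, while leaving the first $n+1$ admissibles untouched and the real $g$ nonetheless decodable by machines able to run past $\omega_{n+1}^{CK}$. This is a fine-structural coding argument in the spirit of the results behind Theorems \ref{relITRM} and \ref{hp} and Lemma \ref{compandgen}, lifted one admissible level. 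A secondary point needing care is that the product $\mathbb{Q}\times\mathbb{Q}$ inherits the first property of $\mathbb{Q}$ (products of tame forcings can fail to be tame), which one arranges by equipping $\mathbb{Q}$ with a suitable productive closure-and-chain-condition property, or else by replacing the product with a two-step iteration --- adding $z$ generically over $L_{\Omega}[y]$ --- and verifying the symmetry required for mutual randomness directly.
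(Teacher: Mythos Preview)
Your overall strategy---produce $y,z$ that are ``generic enough'' below $\omega_{n+1}^{CK}$ to be mutually $ITRM_n$-random, yet ``informative enough'' above it to compute $x$---matches the paper's, but you implement it with far heavier machinery than necessary. The paper does not design a new forcing at all: it simply takes $y,z$ mutually \emph{Cohen}-generic over $L_{\omega_{n+1}^{CK}}$, chosen so that $\omega_{n+2}^{CK,y}=\omega_{n+2}^{CK,z}$ equals a prescribed admissible $\alpha$ large enough that $x\in L_{\alpha}$ (for part (1), $\alpha=\omega_{\omega+1}^{CK}$; for part (2), $\alpha$ is the least admissible with $x\in L_{\alpha}$). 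Mutual Cohen-genericity over $L_{\omega_{n+1}^{CK}}$ already yields mutual $ITRM_n$-randomness by the register-bounded, relativized version of the argument behind Corollary~\ref{Admpreserv} and Lemma~\ref{compandgen} that you yourself invoke; and $x\in L_{\alpha}=L_{\omega_{n+2}^{CK,y}}\subseteq L_{\omega_{\omega}^{CK,y}}[y]$ gives $x\leq_{ITRM}y$ (and likewise for $z$) directly from Theorem~\ref{relITRM}. Since the product of two Cohen forcings is again Cohen forcing, the product-forcing worries you raise in your final paragraph simply do not arise.

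By contrast, your route requires building a Jensen-type coding forcing $\mathbb{Q}$ with a delicate ``tame below $\omega_{n+1}^{CK}$, destructive above'' profile, and then checking that $\mathbb{Q}\times\mathbb{Q}$ inherits the tameness---both of which you explicitly identify as the heart of the argument and then leave undone. That is a genuine gap: what you have written is a plan for a proof rather than a proof, and the deferred part is exactly the nontrivial one. The moral is that Cohen-genericity over a fixed countable structure is such a weak (comeager) constraint that one can impose it while \emph{simultaneously} arranging the admissibility spectrum above $\omega_{n+1}^{CK}$ at will; there is no need to bake the coding into the forcing notion itself.
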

\begin{proof}
(1) Let $x:=cc(L_{\omega_{\omega+1}^{CK}})$ (the $<_{L}$-minimal real coding $L_{\omega_{\omega+1}^{CK}}$) and 
pick $y$ and $z$ mutually Cohen-generic over $L_{\omega_{n+1}^{CK}}$ such that $\omega_{n+2}^{CK,y}=\omega_{n+2}^{CK,z}=\omega_{\omega+1}^{CK}$.
Then $x$ is certainly $ITRM$-computable both from $y$ and $z$, and $y$ and $z$ are mutually $ITRM_n$-random. Also, $x\notin L_{\omega_{\omega}^{CK}}$, so 
$x$ is not $ITRM$-computable.\\
(2) Let $\alpha$ be the smallest admissible with $x\in L_{\alpha}$. Clearly, $x$ is $ITRM_m$-computable from $cc(L_{\alpha})$ for some $m_{1}\in\omega$.
Now, pick $y$ and $z$ mutually Cohen-generic over $L_{\omega_{n+1}^{CK}}$ such that $\omega_{n+2}^{CK,y}=\omega_{n+2}^{CK,z}=\omega_{\omega+1}^{CK}$.
As above, $y$ and $z$ are mutually $ITRM_n$-random and $x$ is $ITRM$-computable both from $y$ and $z$, hence $ITRM_{m_{2}}$-computable from $y$
and $ITRM_{m_{3}}$-computable from $z$ for some $m_{2},m_{3}\in\omega$. Taking $m=max\{m_{1},m_{2},m_{3}\}$, we obtain the desired result.
\end{proof}

\section{An analogue to van Lambalgen's theorem}

A crucial result of classical algorithmic randomness is van Lambalgen's theorem, which
 states that for reals $a$ and $b$, $a\oplus b$ is $ML$-random iff $a$ is $ML$-random and $b$ is $ML$-random relative to $a$.
In this section, we demonstrate an analogous result for $ITRM$-randomness.

\begin{defini}
 $X\subseteq\mathfrak{P}(\omega)$ is called $ITRM$-decidable iff there is an $ITRM$-program $P$ such that $P^{x}\downarrow=1$ iff $x\in X$ and $P^{x}\downarrow=0$, otherwise. In this case we say that $P$ decides $X$.
$P$ is called deciding iff there is some $X$ such that $P$ decides $X$. We say that $X$ is decided by $P$ in the oracle $y$ iff $X=\{x\mid P^{x\oplus y}\downarrow=1\}$ and $\mathfrak{P}(\omega)-X=\{x\mid P^{x\oplus y}\downarrow=0\}$.
The other notions relativize in the obvious way.
\end{defini}

%

\begin{lemma}
 Let $Q$ be a deciding $ITRM$-program using $n$ registers and $a\subseteq\omega$. Then $\{y|Q^{y\oplus a}\downarrow=1\}$ is meager iff
$Q^{x\oplus a}\downarrow=0$ for all $x\in L_{\omega_{n+1}^{CK,a}+3}[a]$ that are Cohen-generic over $L_{\omega_{n+1}^{CK,a}+1}[a]$.
\end{lemma}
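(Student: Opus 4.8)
Write $\lambda:=\omega_{n+1}^{CK,a}$, let $2^{<\omega}$ denote Cohen forcing, and put $M:=L_{\lambda+1}[a]$; the forcing theorem for provident sets (Lemma~32 of~\cite{CaSc}) is available at $M$, just as it is applied at $L_{\omega_{\omega}^{CK,a}+1}[a]$ in the proof of Lemma~\ref{compandgen}. The plan is to reduce both implications to this forcing theorem, the decisive point being that forcing over $M$ already determines the behaviour of $Q$. Indeed, since $2^{<\omega}\in L_{\omega_{1}^{CK,a}}[a]$ and admissibility is preserved by set forcing, any $y$ Cohen-generic over $M$ is Cohen-generic over each $L_{\omega_{i}^{CK,a}}[a]$ with $i\le n+1$, so each of the $n+1$ ordinals $\omega_{1}^{CK,a},\dots,\omega_{n+1}^{CK,a}$ remains $(y\oplus a)$-admissible and hence $\omega_{n+1}^{CK,y\oplus a}\le\lambda$. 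As $Q$ is deciding, $Q^{y\oplus a}\downarrow\in\{0,1\}$ for every $y$, and by Theorem~\ref{hp} the computation halts in fewer than $\omega_{n+1}^{CK,y\oplus a}\le\lambda$ steps; hence it lies in $L_{\lambda}[y\oplus a]\subseteq M[y]$, and for $i\in\{0,1\}$ the assertion ``$Q^{y\oplus a}\downarrow=i$'' is $\Sigma_{1}$ with parameter $\check a$ and holds in $M[y]$ iff it holds in $V$. I would also invoke the standard facts that the Cohen-generics over a fixed countable transitive model form a comeager set, and that --- by the usual bookkeeping for successor levels of the constructible hierarchy (here $L_{\lambda+1}[a]$ is countable in $L_{\lambda+2}[a]$, $\lambda$ being merely the $(n+1)$st $a$-admissible) --- below every condition there is a Cohen-generic over $M$ lying in $L_{\lambda+3}[a]$.

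\noindent\textbf{($\Rightarrow$)} Assume $A:=\{y\mid Q^{y\oplus a}\downarrow=1\}$ is meager, and suppose toward a contradiction that some $x\in L_{\lambda+3}[a]$ Cohen-generic over $M$ has $Q^{x\oplus a}\downarrow=1$. By the discussion above this is witnessed inside $M[x]$, so the truth lemma yields a condition $p$ in the generic filter $G_{x}$ with $p\Vdash_{M}(Q^{\dot G\oplus\check a}\downarrow=1)$. Then every $y$ Cohen-generic over $M$ that extends $p$ satisfies $Q^{y\oplus a}\downarrow=1$; such $y$ are comeager in the basic open set determined by $p$, hence form a non-meager subset of $A$ --- a contradiction.

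\noindent\textbf{($\Leftarrow$)} Assume $Q^{x\oplus a}\downarrow=0$ for every $x\in L_{\lambda+3}[a]$ Cohen-generic over $M$. Put $D:=\{p\in 2^{<\omega}\mid p\Vdash_{M}(Q^{\dot G\oplus\check a}\downarrow=0)\}$; since the forcing relation for $\Sigma_{1}$ formulas is definable over $M$ and $2^{<\omega}$ is countable, $D\in L_{\lambda+2}[a]$. First I would show $D$ is dense: given $p$, choose $x\in L_{\lambda+3}[a]$ Cohen-generic over $M$ extending $p$; by hypothesis $Q^{x\oplus a}\downarrow=0$, so $M[x]$ satisfies $Q^{\dot G\oplus\check a}\downarrow=0$, and the truth lemma produces $q\in G_{x}$ with $q\Vdash_{M}(Q^{\dot G\oplus\check a}\downarrow=0)$; as $p,q\in G_{x}$ they are comparable, and the longer of the two extends $p$ and forces $Q^{\dot G\oplus\check a}\downarrow=0$ over $M$, hence lies in $D$. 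Finally, let $y$ be any Cohen-generic over $L_{\lambda+2}[a]$; then $G_{y}$ meets $D$, and since $y$ is also Cohen-generic over $M$ the truth lemma gives $M[y]\models(Q^{\dot G\oplus\check a}\downarrow=0)$, whence $Q^{y\oplus a}\downarrow=0$ and $y\notin A$. Thus $A$ is contained in the complement of the comeager set of Cohen-generics over $L_{\lambda+2}[a]$, so $A$ is meager.

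\noindent\textbf{Main obstacle.} The forcing manipulations are routine; the substance lies in the preliminary observations. The hard part will be verifying that Cohen forcing over $M$ really preserves the first $n+1$ $a$-admissibles --- so that $\omega_{n+1}^{CK,y\oplus a}\le\lambda$ and the whole computation $Q^{y\oplus a}$ is captured inside $M[y]$ --- together with the level calibration showing that ``Cohen-generic over $L_{\lambda+1}[a]$'' and ``element of $L_{\lambda+3}[a]$'' are exactly the right requirements, and the verification that the provident-sets forcing theorem indeed applies at $L_{\lambda+1}[a]$.
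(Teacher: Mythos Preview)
Your proof is correct and follows essentially the same route as the paper: both directions hinge on the halting-time bound from Theorem~\ref{hp}, the forcing truth lemma over $L_{\omega_{n+1}^{CK,a}+1}[a]$, and the existence below every condition of a generic inside $L_{\omega_{n+1}^{CK,a}+3}[a]$. Your write-up is in fact a bit more explicit than the paper's --- you spell out the preservation of the first $n+1$ $a$-admissibles under Cohen forcing (which the paper leaves implicit in its appeal to Theorem~\ref{hp}) and you package the ($\Leftarrow$) direction via an explicit dense set $D\in L_{\lambda+2}[a]$ rather than arguing directly that no condition can force output $1$ --- but these are organizational, not substantive, differences.
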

\begin{proof}
By absoluteness of computations and the bound on $ITRM$-halting times (see Theorem \ref{hp}), 
$Q^{x\oplus a}\downarrow=0$ implies that $Q^{x\oplus a}\downarrow=0$ also holds in $L_{\omega_{n+1}^{CK,a}}[a]$. As this is expressable by a $\Sigma_1$-formula, it must be forced by some condition $p$ by the 
forcing theorem over $KP$ (see e.g. Theorem $10.10$ of \cite{Ma}).

Hence every $y$ extending $p$ will satisfy $Q^{y\oplus a}\downarrow=0$. The set $C$ of reals Cohen-generic over $L_{\omega_{n+1}^{CK,a}+1}[a]$ 
is comeager. Hence, if $Q^{x\oplus a}\downarrow=0$ for some $x\in C$, then $Q^{x\oplus a}\downarrow=0$ for a non-meager (in fact comeager in some interval) set $C^{\prime}$.
Now, for each condition $p$, $L_{\omega_{n+1}^{CK,a}+3}[a]$ will contain a generic filter over $L_{\omega_{n+1}^{CK,a}+1}[a]$ extending $p$ (as $L_{\omega_{n+1}^{CK,a}+1}[a]$ is countable
in $L_{\omega_{n+1}^{CK,a}+3}[a]$). 
Hence, if $Q^{x\oplus a}\downarrow=0$ for all $x\in C\cap L_{\omega_{n+1}^{CK,a}+3}[a]$, then this holds for all elements of $C$ and the complement $\{y|Q^{y\oplus a}\downarrow=1\}$ is therefore meager.

If, on the other hand, $Q^{x\oplus a}\downarrow=1$ for some such $x$, then this already holds for all $x$ in some non-meager (in fact comeager in some interval) set $C^{\prime}$ by the same reasoning. 
\end{proof}

\begin{corollary}{\label{decidemeasure}}
 For a deciding $ITRM$-program $Q$ using $n$ registers, there exists an $ITRM$-program $P$ such that, for all $x,y\in\mathfrak{P}(\omega)$, $P^{x}\downarrow=1$ iff $\{y|Q^{x\oplus y}\downarrow=1\}$ is of non-meager.
\end{corollary}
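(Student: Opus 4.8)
The plan is to convert the characterisation of meagerness supplied by the preceding lemma into an explicit decision procedure, the key observation being that every ingredient appearing there is $ITRM$-computable in the oracle $x$.

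First I would replace $Q$ by the program $Q'$ that runs $Q$ after interchanging the two halves of its oracle, so that $Q'^{\,y\oplus x}=Q^{\,x\oplus y}$ for all $x,y$; this increases the register count only by a fixed finite amount, so we may as well work with the modified program, calling it again $Q$ and letting $n$ denote its number of registers, and with the set $S_{x}:=\{y\mid Q^{\,y\oplus x}\downarrow=1\}$. The preceding lemma, applied with $a=x$, then says that $S_{x}$ is meager iff $Q^{\,z\oplus x}\downarrow=0$ for every $z\in L_{\omega_{n+1}^{CK,x}+3}[x]$ that is Cohen-generic over $L_{\omega_{n+1}^{CK,x}+1}[x]$. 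Since $Q$ is deciding, $Q^{\,z\oplus x}$ always halts with output in $\{0,1\}$, so, taking the contrapositive, $S_{x}$ is non-meager iff there exists $z\in L_{\omega_{n+1}^{CK,x}+3}[x]$ that is Cohen-generic over $L_{\omega_{n+1}^{CK,x}+1}[x]$ and satisfies $Q^{\,z\oplus x}\downarrow=1$.

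The program $P$ will test exactly this existential statement. On oracle $x$ it first computes, uniformly in $x$, a real $c$ coding $L_{\omega_{n+2}^{CK,x}}[x]$; this is possible by Theorem \ref{relITRM} together with the coding techniques of \cite{Ca} and \cite{Ca2}, exactly as in the proof of Corollary \ref{Admpreserv}. Working inside $c$, it identifies $\omega_{n+1}^{CK,x}$ as the $(n+1)$th $x$-admissible and thereby locates the sublevels $L_{\omega_{n+1}^{CK,x}+1}[x]\subseteq L_{\omega_{n+1}^{CK,x}+3}[x]$ and the family of dense subsets of Cohen forcing lying in the former. It then runs through $k\in\omega$: for each $k$ that codes in $c$ a subset $z\subseteq\omega$ belonging to $L_{\omega_{n+1}^{CK,x}+3}[x]$ it checks whether $z$ meets every dense set of Cohen forcing in $L_{\omega_{n+1}^{CK,x}+1}[x]$ (an arithmetic condition in the $c$-codes, hence decidable from $c$), and, if so, whether $Q^{\,z\oplus x}\downarrow=1$. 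For the last test, note that if this computation halts then it halts in fewer than $\omega_{n+1}^{CK,z\oplus x}$ steps by Theorem \ref{hp}, and $\omega_{n+1}^{CK,z\oplus x}=\omega_{n+1}^{CK,x}<\omega_{n+2}^{CK,x}$ because Cohen forcing over $L_{\omega_{n+1}^{CK,x}+1}[x]$ preserves the first $n+1$ $x$-admissibles — the same preservation already used in the proof of the preceding lemma (cf. also Corollary \ref{Admpreserv}); hence the run of $Q^{\,z\oplus x}$ is an element of the model coded by $c$ and its output can be read off $c$ by absoluteness of computations (alternatively, since $z$ is extractable from $c$, one may simply simulate $Q^{\,z\oplus x}$ directly, which again halts well below $\omega_{\omega}^{CK,x}$). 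If some such $k$ is found with $Q^{\,z\oplus x}\downarrow=1$, then $P$ halts with output $1$; if the search exhausts $\omega$ without success, $P$ halts with output $0$. Recording the outcomes of the individual tests into a single $x$-$ITRM$-computable real and then scanning it — or using the standard $\liminf$-counter idiom for $ITRM$s — makes this a genuine halting $ITRM$-computation, all of whose stages remain below $\omega_{\omega}^{CK,x}$. By the characterisation above, $P^{x}\downarrow=1$ precisely when $\{y\mid Q^{\,x\oplus y}\downarrow=1\}$ is non-meager, and $P^{x}\downarrow=0$ otherwise, as desired.

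The only step I expect to require genuine care is the uniform-in-$x$ computation of the code $c$, equivalently of the admissibles $\omega_{i}^{CK,x}$ and of a code for the relevant initial segment of $L[x]$; but this is precisely the tool already developed for Corollary \ref{Admpreserv}, so nothing new is needed here. Everything downstream of it — testing Cohen-genericity over $L_{\omega_{n+1}^{CK,x}+1}[x]$ inside $c$, reading off the value of $Q^{\,z\oplus x}$ by absoluteness, and the transfinite search over $\omega$ — is routine.
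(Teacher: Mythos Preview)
Your proposal is correct and follows essentially the same route as the paper: compute, uniformly in $x$, a real code for a sufficiently long initial segment of $L[x]$, then locate and test all Cohen generics inside it against $Q$, invoking the preceding lemma. The paper's proof is just a two-line sketch (``compute a code for $L_{\omega_{n+1}^{CK,x}+4}[x]$ \ldots\ identify and check all generics''); you have spelled out the details it omits, including the swap of oracle halves needed to match the lemma's $Q^{y\oplus a}$ with the corollary's $Q^{x\oplus y}$, and you work inside the slightly larger level $L_{\omega_{n+2}^{CK,x}}[x]$ rather than $L_{\omega_{n+1}^{CK,x}+4}[x]$, which is harmless.
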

\begin{proof}
 From $x$, compute, using sufficiently many extra registers, a real code for $L_{\omega_{n+1}^{CK,x}+4}$ in the oracle $x$. This can be done uniformly in $x$. Then, using the techniques developed in section $6$ of \cite{ITRM}, 
identify and check all generics in that structure,
 according to the last lemma. 
\end{proof}

 Our proof of the
$ITRM$-analogue for van Lambalgen's theorem now follows a general strategy inspired by that used in \cite{DoHi}, Theorem $6.9.1$ and $6.9.2$: 

\begin{thm}{\label{Dir1}}
 Assume that $a$ and $b$ are reals such that $a\oplus b$ is not $ITRM$-random. Then $a$ is not $ITRM$-random or $b$ is not $ITRM$-random relative to $a$.
\end{thm}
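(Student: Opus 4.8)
The plan is to contrapose: assuming both that $a$ is $ITRM$-random and that $b$ is $ITRM$-random relative to $a$, I will show $a\oplus b$ is $ITRM$-random. So suppose $a\oplus b$ fails to be random, witnessed by an $ITRM$-decidable meager set $X\ni a\oplus b$, decided by a program $Q$ using, say, $n$ registers. Via the standard identification of $X\subseteq\mathfrak{P}(\omega)$ with a set of pairs (the Proposition on $\tilde{X}$), think of $Q$ as taking an oracle of the form $u\oplus v$ and deciding a meager set of pairs $(u,v)$ containing $(a,b)$.

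The key device is the ``meager-section'' test of Corollary \ref{decidemeasure}: for the deciding program $Q$ there is an $ITRM$-program $P$ such that $P^{u}\downarrow=1$ iff $\{v\mid Q^{u\oplus v}\downarrow=1\}$ is non-meager. First I would run the classical Kuratowski--Ulam argument (available here since everything in sight is Borel, indeed $\Delta^1_2$ and much lower): since $X$ is meager, the set $B:=\{u\mid \{v\mid Q^{u\oplus v}\downarrow=1\}\text{ is non-meager}\}$ is itself meager. By Corollary \ref{decidemeasure}, $B$ is $ITRM$-decidable (by $P$). Now split into two cases according to whether $a\in B$.

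If $a\in B$, then $a$ lies in the $ITRM$-decidable meager set $B$, so $a$ is not $ITRM$-random and we are done. If $a\notin B$, then the section $X_{a}:=\{v\mid Q^{a\oplus v}\downarrow=1\}$ is meager; moreover it contains $b$ (because $(a,b)\in X$), and it is $ITRM$-decidable in the oracle $a$ — indeed the program obtained from $Q$ by hard-wiring the first oracle coordinate to $a$ decides it, and by the relativized halting-problem solver of Theorem \ref{hp} this is a genuine (total) decision procedure in the oracle $a$. Hence $b$ lies in an $a$-$ITRM$-decidable meager set, so $b$ is not $ITRM$-random relative to $a$. Either way the contrapositive conclusion holds.

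The main obstacle is making the Kuratowski--Ulam step fully rigorous in this effective setting: one must check that ``$\{v\mid Q^{u\oplus v}\downarrow=1\}$ non-meager'' has low enough definable complexity for the category quantifier to behave (this is exactly what the previous Lemma and Corollary \ref{decidemeasure} secure, by reducing the category question to the behaviour of $Q$ on the countably many generics inside $L_{\omega_{n+1}^{CK,u}+3}[u]$), and that meagerness of $X$ really does force meagerness of $B$. A secondary point to be careful about is uniformity: the program $P$ of Corollary \ref{decidemeasure} depends only on $Q$ (hence on $n$), not on $a$, so the set $B$ is decidable outright rather than merely in some oracle — which is what lets the first case conclude that $a$ itself is non-random. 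Once these bookkeeping issues are handled, the proof is the two-case split above.
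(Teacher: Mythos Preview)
Your proposal is correct and follows essentially the same route as the paper: define the set of ``bad'' first coordinates $B=\{u\mid \{v:Q^{u\oplus v}\downarrow=1\}\text{ non-meager}\}$, use Corollary~\ref{decidemeasure} to see $B$ is $ITRM$-decidable, invoke Kuratowski--Ulam (via the Baire property, which the paper secures by noting the set is provably $\Delta^1_1$) to see $B$ is meager, and then split on $a\in B$ versus $a\notin B$. The only cosmetic difference is that you frame the argument as a contrapositive while the paper states it directly, but the two-case split and the ingredients used are identical.
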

\begin{proof}
As $a\oplus b$ is not $ITRM$-random, let $X$ be an $ITRM$-decidable meager set of reals such that $a\oplus b\in X$. Suppose that $P$ is a program deciding $X$.\\
Let $Y:=\{x|\{y\mid x\oplus y\in X\}\text{ non-meager}\}$. By Corollary \ref{decidemeasure}, $Y$ is $ITRM$-decidable.\\
We claim that $Y$ is meager. First, $Y$ is provably $\Delta_{1}^{1}$ and hence has the Baire property (see e.g. Exercise $14.5$ of \cite{Ka}). Hence, by
the Kuratowski-Ulam-theorem (see e.g. \cite{Ke}, Theorem $8.41$), $Y$ is meager. Consequently, if $a\in Y$, then $a$ is not $ITRM$-random.\\

Now suppose that $a\notin Y$. This means that $\{y\mid a\oplus y\in X\}$ is meager. But $S:=\{y\mid a\oplus y\in X\}$ is easily seen to be $ITRM$-decidable in the oracle $a$ and
$b\in S$. Hence $b$ is not $ITRM$-random relative to $a$.
\end{proof}

\begin{thm}{\label{Dir2}}
 Assume that $a$ and $b$ are reals such that $a\oplus b$ is $ITRM$-random. Then $a$ is $ITRM$-random and $b$ is $ITRM$-random relative to $a$.
\end{thm}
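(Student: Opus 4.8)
The plan is to establish the two implications
(i) if $a$ is not $ITRM$-random, then $a\oplus b$ is not $ITRM$-random, and
(ii) if $b$ is not $ITRM$-random relative to $a$, then $a\oplus b$ is not $ITRM$-random;
the conjunction of these, contraposed, is precisely the assertion of the theorem. Throughout I would freely identify $\mathfrak{P}(\omega)\times\mathfrak{P}(\omega)$ with $\mathfrak{P}(\omega)$ via $\oplus$, transferring meagerness across this identification by the Proposition above relating a set $X$ to $\tilde{X}=\{x\oplus y\mid(x,y)\in X\}$, and I use that for $w=c\oplus d$ the components $c,d$ are arithmetic in $w$, hence $\leq_{ITRM}w$.

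Part (i) is the easy half, modelled on the straightforward direction of van Lambalgen's theorem. Given an $ITRM$-decidable meager set $X\ni a$, decided by a program $R$, I would take $X':=\{c\oplus d\mid c\in X,\ d\subseteq\omega\}$. The product $X\times\mathfrak{P}(\omega)$ is a countable union of nowhere dense cylinders, hence meager, so the Proposition gives that $X'$ is meager; and $X'$ is $ITRM$-decidable, since from an oracle $w$ one can $ITRM$-compute the first component $c$, then simulate $R^{c}$ and output its answer. As $a\in X$ we get $a\oplus b\in X'$, so $a\oplus b$ is not $ITRM$-random.

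Part (ii) is the substantive half. Suppose $b$ is not $ITRM$-random relative to $a$ and fix a deciding $ITRM$-program $Q$, using $n$ registers, with $Y:=\{d\mid Q^{d\oplus a}\downarrow=1\}$ meager and $b\in Y$. The naive candidate $\{c\oplus d\mid Q^{d\oplus c}\downarrow=1\}$ need not be meager: only the fibre over $c=a$ is under control, while over other $c$ the fibre $\{d\mid Q^{d\oplus c}\downarrow=1\}$ may be non-meager. The remedy is to trim such fibres away. Let $P$ be the program supplied by Corollary \ref{decidemeasure} applied to $Q$, so $P^{c}\downarrow=1$ exactly when $\{d\mid Q^{d\oplus c}\downarrow=1\}$ is non-meager, and set
\[
Z:=\{c\oplus d\mid Q^{d\oplus c}\downarrow=1 \text{ and } P^{c}\downarrow=0\}.
\]
I would then check: (a) $Z$ is $ITRM$-decidable --- from $w$ compute the components $c,d$, run $P^{c}$, and if $P^{c}\downarrow=1$ output $0$, otherwise output the value of $Q^{d\oplus c}$; (b) $a\oplus b\in Z$, since $Q^{b\oplus a}\downarrow=1$ (as $b\in Y$) and $P^{a}\downarrow=0$ (as $\{d\mid Q^{d\oplus a}\downarrow=1\}=Y$ is meager); and (c) $Z$ is meager. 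For (c), set $\hat{Z}:=\{(c,d)\mid Q^{d\oplus c}\downarrow=1,\ P^{c}\downarrow=0\}$; by the Proposition it suffices to see that $\hat{Z}$ is meager. Now $\hat{Z}$ is $\Delta_{1}^{1}$ (the relation decided by a deciding $ITRM$-program is $\Delta_{1}^{1}$, as in the proof of Theorem \ref{Dir1}) and hence has the Baire property, while every vertical section $\hat{Z}_{c}$ is meager, being equal to $\{d\mid Q^{d\oplus c}\downarrow=1\}$ when that set is meager and empty otherwise; so the Kuratowski--Ulam theorem yields that $\hat{Z}$ is meager. Then $Z$ is an $ITRM$-decidable meager set containing $a\oplus b$, whence $a\oplus b$ is not $ITRM$-random.

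I expect the only genuine obstacle to be the one treated in part (ii): manufacturing a plain test for $a\oplus b$ that is genuinely meager, starting from a relativized test for $b$ over $a$ which controls a single fibre only. This should go through more smoothly here than in the classical Martin-L\"of setting --- where one is forced to cap the relativized test's measure at $2^{-n}$ incrementally during enumeration --- precisely because ``non-meager'' is itself outright $ITRM$-decidable by Corollary \ref{decidemeasure}, so the trimming is a single $\Delta_{1}^{1}$ operation; the Baire property of $ITRM$-decidable sets together with Kuratowski--Ulam then closes the argument.
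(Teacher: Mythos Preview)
Your proof is correct and follows essentially the same approach as the paper's. Both arguments handle part (i) by passing to $X\times\mathfrak{P}(\omega)$, and handle part (ii) by using Corollary~\ref{decidemeasure} to trim away fibres that fail to be meager, then invoking the Baire property of the resulting $\Delta_{1}^{1}$ set together with Kuratowski--Ulam; your write-up is in fact slightly more explicit than the paper's in spelling out the Kuratowski--Ulam step for the trimmed set.
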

\begin{proof}
 Assume first that $a$ is not $ITRM$-random, and let $X$ be an $ITRM$-decidable meager set with $a\in X$.
Then $X\oplus[0,1]$ is also meager and $X\oplus [0,1]$ is $ITRM$-decidable. As $a\in X$, we have $a\oplus b\in X\oplus[0,1]$, so
$a\oplus b$ is not $ITRM$-random, a contradiction.\\
Now suppose that $b$ is not $ITRM$-random relative to $a$, and let $X$ be a meager set of reals such that $b\in X$ and $X$ is $ITRM$-decidable in the oracle $a$.
Let $Q$ be an $ITRM$-program such that $Q^{a}$ decides $X$. Our goal is to define a deciding program $\tilde{Q}$ such that $\tilde{Q}^{a}$ still decides $X$, but
also $\{x|\tilde{Q}^{x}\downarrow=1\}$ is meager. This suffices, as then $\tilde{Q}^{a\oplus b}\downarrow=1$ and $\{x|\tilde{Q}^{x}\downarrow=1\}$ is $ITRM$-decidable.
$\tilde{Q}$ operates as follows: Given $x=y\oplus z$, check whether $\{w\mid Q^{y\oplus w}\}$ is meager, using Corollary \ref{decidemeasure}. If that is the case,
carry out the computation of $Q^{x}$ and return the result. Otherwise, return $0$. This guarantees (since $X$ is meager) 
that $\{y\mid \tilde{Q}^{x\oplus y}\downarrow=1\}$ is meager and furthermore that $\tilde{Q}^{a\oplus x}\downarrow=1$ iff $Q^{a\oplus x}\downarrow=1$ iff
$x\in X$ for all reals $x$, so that $\{x|\tilde{Q}^{a\oplus x}\downarrow=1\}$ is just $X$, as desired.
\end{proof}

Combining Theorem \ref{Dir1} and \ref{Dir2} gives us the desired conclusion:

\begin{thm}{\label{vLamb}}
 Given reals $x$ and $y$, $x\oplus y$ is $ITRM$-random iff $x$ is $ITRM$-random and $y$ is $ITRM$-random relative to $x$.
In particular, if $x$ and $y$ are $ITRM$-random, then $x$ is $ITRM$-random relative to $y$ iff $y$ is $ITRM$-random relative to $x$.
\end{thm}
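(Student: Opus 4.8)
The plan is to obtain the two implications of the main equivalence directly from the two preceding theorems, and then to derive the ``in particular'' clause by a short symmetry argument.

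For the forward direction of the equivalence I would assume $x\oplus y$ is $ITRM$-random; then Theorem~\ref{Dir2} gives immediately that $x$ is $ITRM$-random and $y$ is $ITRM$-random relative to $x$. For the converse I argue contrapositively: if $x\oplus y$ is \emph{not} $ITRM$-random, then by Theorem~\ref{Dir1} either $x$ is not $ITRM$-random or $y$ is not $ITRM$-random relative to $x$, i.e. the conjunction ``$x$ is $ITRM$-random and $y$ is $ITRM$-random relative to $x$'' fails. Together these two establish the first assertion.

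For the ``in particular'' clause, first record the auxiliary fact that $x\oplus y$ is $ITRM$-random iff $y\oplus x$ is $ITRM$-random. This holds because the coordinate swap on the pairing function is an $ITRM$-computable bijection of $\omega$, so it induces a homeomorphism of $\mathfrak{P}(\omega)$ under which the preimage of any $ITRM$-decidable set is again $ITRM$-decidable (apply the swap, then run the decider) and the preimage of any meager set is meager (this is also an instance of the Proposition on $\tilde{X}$); hence a witnessing $ITRM$-decidable meager set for one of $x\oplus y$, $y\oplus x$ transforms into one for the other. Now let $x$ and $y$ both be $ITRM$-random and suppose $y$ is $ITRM$-random relative to $x$. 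Since $x$ is $ITRM$-random, the equivalence just proved gives that $x\oplus y$ is $ITRM$-random; by the auxiliary fact $y\oplus x$ is $ITRM$-random; and applying the equivalence again, now in the variables $y,x$ and using that $y$ is $ITRM$-random, we conclude that $x$ is $ITRM$-random relative to $y$. Exchanging the roles of $x$ and $y$ yields the reverse implication.

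I expect no genuine obstacle here, since every ingredient is already in hand; the only point worth spelling out is the symmetry of $\oplus$ with respect to $ITRM$-randomness, which is needed precisely because ``random relative to'' is not manifestly symmetric — indeed, extracting that symmetry is the whole content of the second sentence of the theorem.
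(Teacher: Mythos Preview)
Your proposal is correct and matches the paper's own argument: the paper states the theorem as an immediate combination of Theorems~\ref{Dir1} and~\ref{Dir2}, and proves the ``in particular'' clause as a separate corollary by exactly your symmetry argument (pass from $x\oplus y$ to $y\oplus x$ and apply the equivalence again), with the paper merely calling the step ``$x\oplus y$ random $\Rightarrow$ $y\oplus x$ random'' \emph{trivial} where you spell out the coordinate-swap justification. There is nothing to add.
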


We note that a classical Corollary to van Lambalgen's theorem continues to hold in our setting:

\begin{corollary}
 Let $x,y$ be $ITRM$-random. Then $x$ is $ITRM$-random relative to $y$ iff $y$ is $ITRM$-random relative to $x$.
\end{corollary}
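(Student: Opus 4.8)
The plan is to read this off Theorem \ref{vLamb}. Since $x$ and $y$ are assumed $ITRM$-random, applying Theorem \ref{vLamb} to the ordered pair $(y,x)$ shows that $x$ is $ITRM$-random relative to $y$ if and only if $y\oplus x$ is $ITRM$-random; applying it to the pair $(x,y)$ shows that $y$ is $ITRM$-random relative to $x$ if and only if $x\oplus y$ is $ITRM$-random. So everything reduces to the single claim that $x\oplus y$ is $ITRM$-random if and only if $y\oplus x$ is.

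For that claim it is enough, by symmetry, to show: whenever $Z\subseteq\mathfrak{P}(\omega)$ is $ITRM$-decidable and meager with $y\oplus x\in Z$, there is an $ITRM$-decidable meager $Z'$ with $x\oplus y\in Z'$. I would take $Z':=\{x'\oplus y'\mid y'\oplus x'\in Z\}$. With $W:=\{(x',y')\in[0,1]^2\mid y'\oplus x'\in Z\}$ we have $Z'=\widetilde W$, and $W$ is the image of $V:=\{(y',x')\mid y'\oplus x'\in Z\}$ under the coordinate-swap homeomorphism of $[0,1]^2$; since $\widetilde V\subseteq Z$ is meager, the Proposition above (on $\oplus$ and meagerness) gives that $V$, hence $W$, hence $Z'=\widetilde W$, is meager. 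For decidability of $Z'$: on input $c\subseteq\omega$, if $c=\emptyset$ answer $1$ iff $\emptyset\in Z$; if $c\neq\emptyset$, $ITRM$-compute $b:=\{i\mid\exists j\ p(i,j)\in c\}$ and $a:=\{j\mid\exists i\ p(i,j)\in c\}$, check whether $c=b\oplus a$ (answering $0$ if not, since then $c$ is not in the range of $\oplus$), and otherwise answer $1$ iff $a\oplus b\in Z$; this is correct because when $c=b\oplus a$ with $c\neq\emptyset$, $(b,a)$ is the unique pair with that property. Finally $x\oplus y\in Z'$ because $y\oplus x\in Z$.

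I expect no real obstacle: this corollary is in fact already the second sentence of Theorem \ref{vLamb}, and is restated here only to emphasize the parallel with the classical theory. The one point needing a little care is the bookkeeping in the preceding paragraph, namely that $ITRM$-randomness of $x\oplus y$ is insensitive to the order of the two reals, where the only non-formal ingredient is that a real in the range of $\oplus$ has $ITRM$-computable and (when nonempty) unique ``factors'' --- its two Cantor projections.
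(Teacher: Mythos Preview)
Your proposal is correct and follows essentially the same approach as the paper: apply Theorem \ref{vLamb} to both orderings $(x,y)$ and $(y,x)$ and reduce to the equivalence of $ITRM$-randomness for $x\oplus y$ and $y\oplus x$. The paper disposes of this last step with the single word ``trivially,'' whereas you spell out the meagerness and decidability of the swapped test in detail; the extra care is fine but not required.
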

\begin{proof}
 Assume that $y$ is $ITRM$-random relative to $x$. By assumption, $x$ is $ITRM$-random. By Theorem \ref{vLamb}, $x\oplus y$ is $ITRM$-random. Trivially, $y\oplus x$ is also $ITRM$-random.
Again by Theorem \ref{vLamb}, $x$ is $ITRM$-random relative to $y$. By symmetry, the corollary holds.
\end{proof}

\section{Some consequences for the structure of $ITRM$-degrees}

In the new setting, we can also draw some standard consequences of van Lambalgen's theorem. 

\begin{defini}
If $x\leq_{ITRM}y$ but not $y\leq_{ITRM} x$, we write $x<_{ITRM}y$. If $x\leq_{ITRM}$ and $y\leq_{ITRM}x$, then we write $x\equiv_{ITRM}y$.
\end{defini}

Clearly, $\equiv_{ITRM}$ is an equivalence relation.
We may hence form, for each real $x$, the $\equiv_{ITRM}$-equivalence class $[x]_{ITRM}$ of $x$, called the $ITRM$-degree of $x$. It is easy to see that $\leq_{ITRM}$ respects $\equiv_{ITRM}$,
so that $[x]_{ITRM}\leq_{ITRM}[y]_{ITRM}$ etc. are well-defined and have the obvious meaning.

\begin{corollary}{\label{ITRMsplitting}}
 If $a$ is $ITRM$-random, $a=a_{0}\oplus a_{1}$, then $a_{0}\nleq_{ITRM} a_{1}$ and $a_{1}\nleq_{ITRM}a_{0}$.
\end{corollary}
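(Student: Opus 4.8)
The plan is to read off what van Lambalgen's theorem (Theorem \ref{vLamb}) tells us about the two halves of a random real, and then contradict it using the fact that, in the $ITRM$-setting, a singleton is a decidable meager set whenever the point in it is available as a computation from the oracle.

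First I would unpack the hypothesis. Since $a=a_{0}\oplus a_{1}$ is $ITRM$-random, Theorem \ref{vLamb} yields that $a_{0}$ is $ITRM$-random and that $a_{1}$ is $ITRM$-random relative to $a_{0}$. In particular $a_{1}$ is $ITRM$-random, since every $ITRM$-decidable meager set is trivially $ITRM$-decidable in the oracle $a_{0}$ (ignore the oracle). So $a_{0}$ and $a_{1}$ are both $ITRM$-random, and the last clause of Theorem \ref{vLamb}, applied with $x:=a_{0}$ and $y:=a_{1}$, then also gives that $a_{0}$ is $ITRM$-random relative to $a_{1}$.

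Next I would derive the contradiction. Suppose $a_{1}\leq_{ITRM}a_{0}$, witnessed by a program $R$ with $R^{a_{0}}\downarrow=a_{1}$. The singleton $\{a_{1}\}$ is nowhere dense, hence meager, and it is $ITRM$-decidable in the oracle $a_{0}$: given an input real $w$, compute $a_{1}$ from $a_{0}$ via $R$ and compare it bitwise with $w$, exactly by the technique used to decide the sets $A_{z}$ and $S$ in the proofs of Theorems \ref{mutuallyrandom} and \ref{weakrandomreducibility}, returning $1$ iff $w=a_{1}$ and $0$ otherwise. Thus $a_{1}$ lies in a meager set that is $ITRM$-decidable in the oracle $a_{0}$, contradicting that $a_{1}$ is $ITRM$-random relative to $a_{0}$. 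Symmetrically, $a_{0}\leq_{ITRM}a_{1}$ makes $\{a_{0}\}$ a meager set that is $ITRM$-decidable in the oracle $a_{1}$ and contains $a_{0}$, contradicting that $a_{0}$ is $ITRM$-random relative to $a_{1}$. Hence $a_{0}\nleq_{ITRM}a_{1}$ and $a_{1}\nleq_{ITRM}a_{0}$.

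The only genuinely delicate point is the claim that $\{a_{1}\}$ is $ITRM$-decidable (not merely semi-decidable) in the oracle $a_{0}$, i.e. that the bitwise comparison can be organized so as to terminate with the correct verdict on every input, including on $w=a_{1}$. This is, however, precisely the decidability argument already carried out in the paper for $A_{z}$ and $S$ — compute a fixed real from the oracle and check the input against it — so it transfers verbatim. Everything else is a direct application of Theorem \ref{vLamb}, so I expect no further obstacle.
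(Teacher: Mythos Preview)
Your argument is correct and essentially identical to the paper's: use Theorem~\ref{vLamb} to conclude that $a_{0}$ and $a_{1}$ are mutually $ITRM$-random, then observe that if $a_{i}\leq_{ITRM}a_{1-i}$ the singleton $\{a_{i}\}$ would be a meager set decidable in the oracle $a_{1-i}$ containing $a_{i}$, contradicting relative randomness. Your only difference is cosmetic---you unpack the derivation of mutual randomness more explicitly via the ``in particular'' clause of Theorem~\ref{vLamb}, whereas the paper simply asserts it in one line.
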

\begin{proof}
 By Theorem \ref{vLamb}, $a_{0}$ and $a_{1}$ are mutually $ITRM$-random. If $a_{0}$ was $ITRM$-computable from $a_{1}$, then $\{a_{0}\}$ would be decidable
in the oracle $a_{1}$, meager and contain the $ITRM$-random real $a_{0}$, a contradiction. By symmetry, the claim follows.
\end{proof}

\begin{lemma}{\label{randomnessandhalting}}
Let $h$ be a real coding the halting problem for $ITRM$s as in the remark following Theorem \ref{weakrandomreducibility}. Then there is an $ITRM$-random real $x\leq_{ITRM} h$.
\end{lemma}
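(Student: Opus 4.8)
The plan is to build $x$ directly as a real meeting no $ITRM$-decidable meager set, using an oracle that $h$ makes available. The enabling fact I would establish first is that $h$ computes a real code $c$ for $L_{\omega_\omega^{CK}}$. By Theorem \ref{relITRM} it suffices to place $cc(L_{\omega_\omega^{CK}})$ inside $L_{\omega_\omega^{CK,h}}[h]$, and for this I would show that $\omega_\omega^{CK}$ is not $h$-admissible. Indeed, from $h$ one recovers each bounded halting problem $H_n$, and then the technique in the proof of Corollary \ref{decidemeasure} (run uniformly in $n$, using $H_n$ to bound the search) computes a code for $L_{\omega_{n+1}^{CK}}$; thus $n\mapsto\omega_{n+1}^{CK}$ is $h$-recursive and cofinal in $\omega_\omega^{CK}$, so $\omega_\omega^{CK}$ is not $h$-admissible and $\omega_1^{CK,h}>\omega_\omega^{CK}$. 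Consequently $c:=cc(L_{\omega_\omega^{CK}})\in L_{\omega_\omega^{CK}+1}\subseteq L_{\omega_\omega^{CK,h}}[h]$, and Theorem \ref{relITRM} gives $c\leq_{ITRM}h$.

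With $c$ in hand I would diagonalize directly against the decidable meager sets. Fix an enumeration $(P_i\mid i\in\omega)$ of the $ITRM$-programs and construct $x=\bigcup_i\sigma_i$ as an increasing union of finite binary strings. At stage $i$, with $P_i$ using $n_i$ registers, I would test whether $P_i$ is deciding and the set $X_i:=\{z\mid P_i^z\downarrow=1\}$ is meager; if so, search for $\sigma_{i+1}\supseteq\sigma_i$ forcing $P_i^{\dot{G}}\downarrow=0$, and otherwise set $\sigma_{i+1}=\sigma_i$. Such a forcing extension exists whenever $X_i$ is meager, since by the Lemma preceding Corollary \ref{decidemeasure} no condition can force $P_i^{\dot{G}}\downarrow=1$ (that would make $X_i$ comeager in an interval), so, $P_i$ being deciding, the conditions forcing $0$ are dense. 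The relevant witnesses live in $L_{\omega_{n_i+1}^{CK}+3}\subseteq L_{\omega_\omega^{CK}}$ and are therefore visible in $c$, which makes both the meagerness test and the search for $\sigma_{i+1}$ $ITRM$-computable in the oracle $c$, uniformly in $i$. The resulting $x$ meets each of these dense ``forcing-$0$'' sets, so $x\notin X_i$ whenever $P_i$ is deciding and $X_i$ is meager; since every $ITRM$-decidable meager set arises as such an $X_i$, the real $x$ meets no $ITRM$-decidable meager set, which by Definition \ref{randomdef} is exactly the assertion that $x$ is $ITRM$-random. As the construction runs in the oracle $c$, we obtain $x\leq_{ITRM}c\leq_{ITRM}h$.

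The step I expect to be the main obstacle is the uniformity claimed in the previous paragraph. For a single deciding program $Q$, Corollary \ref{decidemeasure} already decides (non-)meagerness without an oracle, but the number of registers it consumes grows with the register count of $Q$, so no oracle-free $ITRM$ can treat all $P_i$ at once. Passing through $c=cc(L_{\omega_\omega^{CK}})$ is precisely what removes this register barrier: with $c$ one can, uniformly in $i$, read off a code for $L_{\omega_{n_i+1}^{CK}+4}$, locate the generics over $L_{\omega_{n_i+1}^{CK}+1}$ and the forcing relation there, and thereby both test meagerness of $X_i$ and produce $\sigma_{i+1}$. What must be checked carefully is that this search is genuinely $ITRM$-computable from $c$ --- that the forcing relation for the $\Sigma_1$ statement $P_i^{\dot{G}}\downarrow=0$ is decidable from the code, and that the $\omega$-length recursion defining $x$ can be carried out within finitely many registers using $c$ as oracle. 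Everything else reduces to the forcing theorem over admissible sets invoked in the Lemma before Corollary \ref{decidemeasure} and to the meagerness criterion isolated there.
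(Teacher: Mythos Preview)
There is a genuine gap in the diagonalization step. At stage $i$ you pick $\sigma_{i+1}\supseteq\sigma_i$ with $\sigma_{i+1}\Vdash P_i^{\dot G}\downarrow=0$ and then assert $x\notin X_i$. But the forcing theorem only delivers $P_i^{x}\downarrow=0$ when $x$ is \emph{generic} over the model in question, and nothing in your construction secures that. Concretely, let $P$ be the (plainly $ITRM$-deciding) program with $P^{y}\downarrow=1$ iff $y$ is eventually $0$. Then $X=\{y\mid P^{y}\downarrow=1\}$ is meager and dense. Since every Cohen generic contains infinitely many $1$'s, \emph{every} condition already forces $P^{\dot G}\downarrow=0$; your search may therefore return $\sigma_{i+1}=\sigma_i{}^{\frown}0$ at this stage---and similarly at every stage whose $X_i$ is nowhere dense, where the ``forcing-$0$'' set $D_i$ is again the set of all conditions and meeting it imposes no constraint whatsoever. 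Nothing prevents the resulting $x$ from being eventually $0$ and hence lying in $X$. The implication ``$x$ meets each $D_i$, so $x\notin X_i$'' simply fails.

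The paper sidesteps this by taking $x$ to be an actual Cohen generic over $L_{\omega_\omega^{CK}+1}$, read off from a code for $L_{\omega_\omega^{CK}+2}$ computable from $h$; the forcing theorem then applies directly and yields $ITRM$-randomness of $x$ by the argument you sketch (if $P^{x}\downarrow=1$ were forced by some $p\subseteq x$, the decidable set would be comeager below $p$, contradicting meagerness). Your first paragraph already gets you most of the way there: once you have $c=cc(L_{\omega_\omega^{CK}})\leq_{ITRM}h$, the clean repair is to enumerate from $c$ \emph{all} dense open sets in $L_{\omega_\omega^{CK}}$ and meet each of them, producing a bona fide Cohen generic $x\leq_{ITRM}c$. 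Such an $x$ is in particular generic over every $L_{\omega_{n_i+1}^{CK}+1}$, so the Lemma before Corollary~\ref{decidemeasure} gives $P_i^{x}\downarrow=0$ whenever $X_i$ is meager. That is essentially the paper's route; the bespoke family $\{D_i\}$ is not a substitute for full genericity.
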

\begin{proof}
 Given $h$, we can compute a code for $L_{\omega_{\omega}^{CK}+2}$, which contains a real $x$ which is Cohen-generic over $L_{\omega_{\omega}^{CK}+1}$.
Hence, $x$ itself is $ITRM$-computable from $h$. Assume that $x$ is not $ITRM$-random, so there exists a decidable meager set $X\ni x$.
Let $P$ be a program deciding $X$. Then $P^{x}\downarrow=1$. By the forcing theorem for Cohen-forcing, this must be forced by some condition $p\subseteq x$.
The set $Y$ of $y\supseteq p$ which are Cohen-generic over $L_{\omega_{\omega}^{CK}+1}$ is non-meager (see above) and $p\subseteq y$ implies $p\Vdash P^{y}\downarrow=1$.
As $P$ decides $X$, we must have $Y\subseteq X$, a contradiction to the assumption that $X$ is meager.
\end{proof}

As a corollary, we obtain an analogue solution to the Kleene-Post-theorem on Turing degrees between $0$ and $0^{\prime}$ (see e.g. Theorem VI.1.2 of \cite{So}) for $ITRM$s. 

\begin{corollary}{\label{ITRMPost}}
With $h$ as in Lemma \ref{randomnessandhalting}, there is a real $y$ such that $[0]_{ITRM}<_{ITRM}[y]_{ITRM}<_{ITRM}h$.
\end{corollary}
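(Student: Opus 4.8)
The plan is to use the $ITRM$-random real $x \leq_{ITRM} h$ furnished by Lemma \ref{randomnessandhalting} and split it into two halves, showing that each half has strictly intermediate degree. Concretely, write $x = x_0 \oplus x_1$ and propose to take $y := x_0$. I need three things: that $x_0 \leq_{ITRM} h$, that $x_0$ is not $ITRM$-computable, and that $h \nleq_{ITRM} x_0$. The first is immediate, since $x \leq_{ITRM} h$ and $x_0$ is trivially $ITRM$-computable from $x$ (it is just a recursive subsequence of the bits of $x$), so $x_0 \leq_{ITRM} x \leq_{ITRM} h$.

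For the second point, by Theorem \ref{vLamb} (van Lambalgen), since $x = x_0 \oplus x_1$ is $ITRM$-random, $x_0$ is $ITRM$-random (and $x_1$ is $ITRM$-random relative to $x_0$). If $x_0$ were $ITRM$-computable, then $\{x_0\}$ would be an $ITRM$-decidable set; it is certainly meager (a single point is nowhere dense); and it contains the $ITRM$-random real $x_0$, contradicting Definition \ref{randomdef}. Hence $x_0$ is not $ITRM$-computable, so $[0]_{ITRM} <_{ITRM} [x_0]_{ITRM}$.

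For the third point, suppose toward a contradiction that $h \leq_{ITRM} x_0$. Combined with $x_0 \leq_{ITRM} h$ this gives $x_0 \equiv_{ITRM} h$. Now $x_1$ is $ITRM$-random, hence by Theorem \ref{weakrandomreducibility} no $ITRM$-recognizable non-computable real is $ITRM$-reducible to $x_1$; but $h$ is $ITRM$-recognizable (the remark after Theorem \ref{weakrandomreducibility}) and not $ITRM$-computable, so $h \nleq_{ITRM} x_1$. On the other hand, from $x = x_0 \oplus x_1 \leq_{ITRM} h \equiv_{ITRM} x_0$ we would get $x_1 \leq_{ITRM} x_0$, contradicting Corollary \ref{ITRMsplitting}, which says the two halves of an $ITRM$-random real are $\leq_{ITRM}$-incomparable. (Alternatively, and more directly: $h \leq_{ITRM} x_0$ together with $x_1 \leq_{ITRM} x \leq_{ITRM} h$ would give $x_1 \leq_{ITRM} x_0$, again contradicting Corollary \ref{ITRMsplitting}.) Either way we reach a contradiction, so $h \nleq_{ITRM} x_0$, and therefore $[0]_{ITRM} <_{ITRM} [x_0]_{ITRM} <_{ITRM} h$, as claimed.

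The main obstacle is really just making sure the pieces line up: the crux is that the intermediate real must be simultaneously random (to get strict lower bound via non-computability, and to block $h$ from below via recognizability/Corollary \ref{ITRMsplitting}) and $\leq_{ITRM} h$ (which is where Lemma \ref{randomnessandhalting} does the work). Once $x$ is split, Corollary \ref{ITRMsplitting} is the key lever preventing $x_0$ from computing all of $x$, hence from computing $h$; everything else is a routine composition of reductions.
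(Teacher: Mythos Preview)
Your proof is correct and follows essentially the same route as the paper: split the random $x\leq_{ITRM}h$ from Lemma~\ref{randomnessandhalting} into halves and use van Lambalgen together with Corollary~\ref{ITRMsplitting} to obtain strict intermediacy. Your argument for $h\nleq_{ITRM}x_0$ is in fact slightly more direct than the paper's (which only concludes that \emph{one} of $x_0,x_1$ works, via $[x_0]_{ITRM}\neq[x_1]_{ITRM}$), and the recognizability detour you include is unnecessary but harmless.
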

\begin{proof}
Pick $x$ as in Lemma \ref{randomnessandhalting}, and let $x=x_{0}\oplus x_{1}$. Obviously, we have $x_{0},x_{1}\leq_{ITRM}x$. By Corollary \ref{ITRMsplitting}, $[x_{0}]_{ITRM}\neq[x_{1}]_{ITRM}$,
hence $[x_{0}]_{ITRM}\neq[h]_{ITRM}$ or $[x_{1}]_{ITRM}\neq[h]_{ITRM}$. Assume without loss of generality that the former holds. Then $[x_{0}]_{ITRM}<_{ITRM}[h]_{ITRM}$.
As $x_{0}$ is $ITRM$-random by Theorem \ref{vLamb}, $x_{0}$ is not $ITRM$-computable, so $[0]_{ITRM}<_{ITRM}[x_{0}]_{ITRM}$. Thus $[x_{0}]_{ITRM}$ is as desired.
\end{proof}

\section{Conclusion and further work}
The most pressing issue is certainly to strengthen the parallelism between $ITRM$-randomness and $ML$-randomness by studying the corresponding notion for sets of Lebesgue measure $0$ rather than meager sets.\\
Still, $ITRM$-randomness in its current form shows an interesting behaviour, partly analogous to $ML$-randomness, though by quite different arguments. Similar approaches are likely to work for other machine models of generalized computations,
in particular $ITTM$s (\cite{HaLe}) (which were shown in \cite{CaSc} to obey the analogue of the non-meager part of Theorem \ref{ManyOracles}) and ordinal Turing Machines (\cite{Ko}) (for which the analogues of both parts of Theorem 
\ref{ManyOracles} turned out to be independent from $ZFC$) which we study in ongoing work (\cite{CaSc2}). This further points towards a more general background theory of computation that allow unified arguments 
for all these various models as well as classical computability.
Furthermore, we want to see whether the remarkable conceptual stability of $ML$-randomness 
(for example the equivalence with Chaitin randomness or unpredictabiliy in the sense of r.e. Martingales, see e.g. sections $6.1$ and $6.3$ of \cite{DoHi})
carries over to the new context.

\section{Acknowledgements}

I am indebted to Philipp Schlicht for several helpful discussions of the results and proofs and suggesting various crucial references.

\end{document}